\pgfplotsset{compat=1.18}
\title[Maximal operators associated to critical radius functions]{Characterization of the continuity properties of maximal operators associated to critical radius functions via Dini type conditions}
\author{}
\date{}
\theoremstyle{plain}
   \newtheorem{teo}{Theorem}
   \newtheorem{coro}[teo]{Corollary}
   \newtheorem{lema}[teo]{Lemma}
   \newtheorem{propo}[teo]{Proposition}
\theoremstyle{definition}
\theoremstyle{remark}
 \newtheorem{afirmacion}{Claim}
\numberwithin{equation}{section}
\numberwithin{teo}{section}
\definecolor{aquamarine}{rgb}{0.5, 1.0, 0.83}
\definecolor{americanrose}{rgb}{1.0, 0.01, 0.24}
\definecolor{arsenic}{rgb}{0.23, 0.27, 0.29}
\definecolor{blizzardblue}{rgb}{0.67, 0.9, 0.93}
\definecolor{blush}{rgb}{0.87, 0.36, 0.51}
\definecolor{celestialblue}{rgb}{0.29, 0.59, 0.82}
\definecolor{chocolate(web)}{rgb}{0.82, 0.41, 0.12}
\definecolor{brightpink}{rgb}{1,0,0.5}
\definecolor{cadmiunred}{rgb}{0.89,0,0.13}
\newcounter{BP}
\begin{document}

\begin{abstract}
We give a characterization of the continuity properties of a Luxemburg maximal type operator associated to a critical radius function $\rho$ between Orlicz spaces. This goal is achieved by means of a Dini type condition that includes certain Young functions related to the maximal operator and the spaces involved. Our results provide not only weak Fefferman-Stein type inequalities but also a weak weighted estimate of modular type for the considered operators, which is interesting in its own right. 
On the other hand, we prove the boundedness of the Hardy-Littlewood maximal function associated to $\rho$ between Zygmund spaces of $L\,\log\,L$ type with $A_p$ weights.

\end{abstract}

	\author[F. Berra]{Fabio Berra}
	\address{Fabio Berra, CONICET and Departamento de Matem\'{a}tica (FIQ-UNL),  Santa Fe, Argentina.}
	\email{fberra@santafe-conicet.gov.ar}
	
	\author[M. Carena]{Marilina Carena}
	\address{Marilina Carena, CONICET and FICH (UNL),  Santa Fe, Argentina.}
	\email{marilcarena@gmail.com}

	\author[G. Pradolini]{Gladis Pradolini}
	\address{Gladis Pradolini, CONICET and Departamento de Matem\'{a}tica (FIQ-UNL),  Santa Fe, Argentina.}
	\email{gladis.pradolini@gmail.com}
	
	\thanks{The authors were supported by CONICET, UNL and Gobierno de la Provincia de Santa Fe}
	
	\subjclass[2020]{42B25, 35J10}
	
	\keywords{critical radius function, Young functions, maximal operators, weights}

\maketitle

\section{Introduction}

It is well known that many continuity properties of certain operators in Harmonic Analysis can be directly obtained by studying the corresponding problem for an adequate maximal operator. This behavior is due to a certain control that the latter exerts over the former, that is usually given in the norm of the spaces  where the operators act. A typical inequality that illustrates this fact is given by
\begin{equation}\label{eq: intro - desigualdad de Coifman}
  \int_{\mathbb{R}^n}|\mathcal{T}f(x)|^pw(x)\,dx\leq C\int_{\mathbb{R}^n}\mathcal{M}_{\mathcal{T}}f(x)^pw(x)\,dx  
\end{equation}
where $w$ is a weight, $\mathcal{T}$ is a certain integral operator and $\mathcal{M}_{\mathcal{T}}$ is a maximal operator associated to $\mathcal{T}$. This estimate shows that $\mathcal{T}$ inherits the continuity properties of $\mathcal{M}_{\mathcal{T}}$ in the weighted Lebesgue spaces $L^p(w)$. Several examples are in order for the pair $(\mathcal{T},\mathcal{M}_\mathcal T)$: in  \cite{C72} and \cite{CF74} the authors established~\eqref{eq: intro - desigualdad de Coifman} for the unweighted case and $A_\infty$ weights  for $(T,M)$, respectively, where $T$ is a Calderón-Zygmund operator (CZO) and $M$ the classical Hardy-Littlewood maximal function. For the pair $(T_b^m, M^{m+1})$ this result was proved in \cite{Perez_Sharp97}, where $T_b^m$ is a commutator of order $m$ of $T$ with BMO symbol and $M^{m+1}$ is the $m+1\,$-\,iteration of $M$. It is well known that these iterations of $M$ are equivalent to certain maximal operators associated to a Young function of $L\,$log$\,L$ type (see, for example \cite{B-L-P-R}). 

Regarding operators of convolution type with kernel satisfying certain generalized Hörmander conditions, the corresponding maximal operators are defined by means of more general Young functions (see \cite{LMRdlT} and \cite{Lorente-Riveros-delaTorre05}). 

 Inequalities of Fefferman-Stein type for the operators mentioned above were also considered by many authors (\cite{FS71}, \cite{Kanashiro-Pradolini}, \cite{LMRdlT}, \cite{Perez94}, \cite{PP2001}, \cite{Wilson-89}). It is appropriate to point out that no condition on the weights is assumed in these estimates. 

  The previous discussion shows the relevance of studying  continuity properties of maximal type operators. In this direction, in \cite{Perez-95-Onsuf} Pérez proved an important characterization of the boundedness between Lebesgue spaces of Luxemburg maximal operators associated to a Young function $\Phi$ belonging to certain class related to the underlying spaces. This characterization also includes Fefferman-Stein type inequalities. In a more general setting, in \cite{PW01}, the authors extend this characterization to spaces of homogeneous type under the assumption that every annuli is nonempty, condition that was then removed in \cite{PS04}. Later on, in \cite{Kanashiro-Pradolini}, an extension involving Orlicz spaces and Dini type conditions was given. 

In the Schrödinger setting, in \cite{BCH13} the authors study inequalities in the spirit of \eqref{eq: intro - desigualdad de Coifman} for operators associated to a critical radius function, that is, a function $\rho\colon \mathbb{R}^n\to (0,\infty)$ whose variation is controlled by the existence of two constants $C_0,N_0\geq 1$ such that the inequality
	\begin{equation} \label{eq: constantesRho}
	C_0^{-1}\rho(x) \left(1+ \frac{|x-y|}{\rho(x)}\right)^{-N_0}
	\leq \rho(y)
	\leq C_0 \,\rho(x) \left(1+ \frac{|x-y|}{\rho(x)}\right)^{\tfrac{N_0}{N_0+1}}
	\end{equation}
	holds for every $x,y\in\mathbb{R}^n$. We shall be dealing with the Euclidean space $\mathbb{R}^n$ equipped with this function.

 In this paper we shall be concerned with a problem of the type described above by considering maximal operators of Luxemburg type, $M_\eta^{\rho,\sigma}$, where $\rho$ is a critical radius function satisfying \eqref{eq: constantesRho}. 

 We now introduce the definitions of the maximal operators that will be considered in this article. For further details see also Section~\ref{section: preliminares}.  Given a locally integrable function $f$ and $\sigma\geq 0$, the \textit{Hardy-Littlewood maximal operator associated to~$\rho$}  is defined by
\begin{equation}\label{eq: operador maximal de H-L}
M^{\rho,\sigma}f(x)=\sup_{Q(x_0,r_0)\ni x} \left(1+\frac{r_0}{\rho(x_0)}\right)^{-\sigma}\left(\frac{1}{|Q|}\int_Q |f(y)|\,dy\right),
\end{equation}
where $Q(x_0,r_0)$ stands for the cube with sides parallel to the coordinate axes centered at $x_0$ and with radius~$r_0$, that is, $r_0=\sqrt{n}\,\ell(Q)/2$.

Let $\eta$ be a Young function. For $\sigma\geq 0$ we define
\begin{equation}\label{eq: Maximal generalizada}
  M_\eta^{\rho,\sigma}f(x)=\sup_{Q(x_0,r_0)\ni x} \left(1+\frac{r_0}{\rho(x_0)}\right)^{-\sigma}\|f\|_{\eta,Q},  
\end{equation}
where $\|f\|_{\eta,Q}$ denotes the Luxemburg average of $f$ over the cube $Q$, given by
\[\|f\|_{\eta,Q}=\inf\left\{\lambda>0: \frac{1}{|Q|}\int_Q \eta\left(\frac{|f(x)|}{\lambda}\right)\,dx\leq 1\right\}.\]
When $\sigma=0$ we have that $M_\eta^{\rho,\sigma}=M_\eta$, the classical generalized maximal function associated to $\eta$. 

Given a weight $w$ and a Young function $\Phi$, the \emph{Zygmund space} $L^{\Phi}(w)$ is defined by
\[L^{\Phi}(w)=\left\{f \text{ measurable }: \varrho_{\Phi,w}(f/\lambda)<\infty \text{ for some 
}\lambda>0\right\},\]
where 
\[\varrho_{\Phi,w}(f/\lambda)=\int_{\mathbb{R}^n}\Phi\left(\frac{|f(x)|}{\lambda}\right)w(x)\,dx.\]

For $f\in L^\Phi(w)$, the functional
\[\|f\|_{\Phi,w}=\inf\left\{\lambda>0: \varrho_{\Phi,w}(f/\lambda)\leq 1\right\}\]
is a norm on this space. Moreover, $(L^\Phi(w), \|\cdot\|_{\Phi,w})$ is a Banach space. When $w=1$, we directly write $\varrho_{\Phi,w}=\varrho_\Phi$ and $L^\Phi(w)=L^\Phi$.

Let $a$ and $b$ be positive continuous functions defined on $[0,\infty)$ such that $a(0)=b(0)=0$, and we shall also assume that $b$ is non decreasing and satisfies $\displaystyle \lim_{s\to \infty} b(s)=\infty$.
Let us also consider the functions $\phi$ and $\psi$ given by
\begin{equation}\label{eq: definicion de phi y psi}
  \phi(t)=\int_0^t a(s)\,ds \quad\textup{ and }\quad \psi(t)=\int_0^t b(s)\,ds.  
\end{equation}
 Under the assumptions on $b$, $\psi$ is a Young function.

We are now in position to state the main results of this work. The first theorem contains an important characterization of the continuity properties of $M_\eta^{\rho,\sigma}$ by means of a Dini type condition that involves $\eta$ and the corresponding functions associated to the underlying spaces. This condition generalizes the well known $B_p$ condition introduced by Pérez in \cite{Perez-95-Onsuf}. This result requires a weighted modular Fefferman-Stein type estimate, leading to a modular weak type inequality for $M_\eta^{\rho,\sigma}$ which is interesting by itself.

\begin{teo}\label{teo: equivalencias}
Let $\eta$ be a normalized and differentiable Young function in $\Delta_2$. Let $a,b,\phi$ and $\psi$ functions defined as in~\eqref{eq: definicion de phi y psi}. Then the following statements are equivalent:
\begin{enumerate}[\rm(a)]
    \item\label{item: teo: equivalencias - item a} There exists a positive constant $C$ such that the inequality
    \[\int_0^{t}\frac{a(s)}{s}\eta'(t/s)\,ds\leq Cb(Ct)\]
    holds for every $t\geq 0$.
    \item\label{item: teo: equivalencias - item b} Given any $\theta\geq 0$, there exist nonnegative constants $C$ and $\sigma$ such that the inequality
    \[\int_{\mathbb{R}^n}\phi(M_\eta^{\rho,\sigma} f(x))w(x)\,dx\leq C\int_{\mathbb{R}^n} \psi(C|f(x)|)M^{\rho,\theta}w(x)\,dx\]
    holds for every $w\geq 0$ and every function $f\in L^{\psi}(M^{\rho,\theta}w)$.
    \item\label{item: teo: equivalencias - item c} For every $\theta\geq 0$, there exist nonnegative constants $C$ and $\sigma$  such that the estimate
    \[\|M_\eta^{\rho,\sigma}f\|_{\phi,w}\leq C\|f\|_{\psi, M^{\rho,\theta}w}\]
    holds for every $w\geq 0$.
    \item \label{item: teo: equivalencias - item d} There exist nonnegative constants $C$ and $\sigma$  such that the inequality
    \[\int_{\mathbb{R}^n}\phi(M_\eta^{\rho,\sigma} f(x))\,dx\leq C\int_{\mathbb{R}^n} \psi(C|f(x)|)\,dx\]
    holds for every $f\in L^{\psi}$.
    \item  \label{item: teo: equivalencias - item e} For every $\theta\geq 0$ there exists $C, \sigma\geq 0$ such that the inequality
    \[\int_{\mathbb{R}^n}\phi\left(\frac{M^{\rho,\gamma}f(x)}{M_{\tilde \eta}^{\rho,\gamma-\sigma}u(x)}\right)w(x)\,dx\leq C\int_{\mathbb{R}^n} \psi\left(\frac{|f(x)|}{u(x)}\right)M^{\rho,\theta}w(x)\,dx\]
    holds for every $\gamma\geq \sigma$ and every nonnegative functions $f$, $u$ and $w$.
\end{enumerate}
\end{teo}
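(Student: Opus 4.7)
The plan is to close the equivalence via the chain (a) $\Rightarrow$ (b) $\Rightarrow$ (e), (b) $\Rightarrow$ (c), (b) $\Rightarrow$ (d), and (d) $\Rightarrow$ (a). The four routine implications are dealt with first. The step (b) $\Rightarrow$ (c) is the standard translation between modular and norm inequalities in Orlicz spaces, using the $\Delta_2$ property of $\psi$ (which follows from the assumptions on $b$) applied to $f/\lambda$ with $\lambda = \|f\|_{\psi,M^{\rho,\theta}w}$. For (b) $\Rightarrow$ (d), take $w\equiv 1$ and observe that $M^{\rho,\theta}1\equiv 1$, since arbitrarily small cubes make the factor $(1+r/\rho)^{-\theta}$ tend to $1$. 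For (b) $\Rightarrow$ (e), the key is the pointwise estimate
\[\frac{M^{\rho,\gamma}f(x)}{M_{\tilde\eta}^{\rho,\gamma-\sigma}u(x)} \leq 2\, M_\eta^{\rho,\sigma}\!\left(\frac{f}{u}\right)(x),\]
obtained by writing $|f|=(|f|/u)\cdot u$ inside the average defining $M^{\rho,\gamma}f$ and using the generalized Hölder inequality for Luxemburg norms with complementary pair $(\eta,\tilde\eta)$; substituting $f/u$ in (b) yields (e).

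The core of the argument is (a) $\Rightarrow$ (b). Fix $w\geq 0$ and $\theta\geq 0$, and write
\[\int_{\mathbb R^n}\phi(M_\eta^{\rho,\sigma}f)w\,dx = \int_0^\infty a(\lambda)\,w\bigl(\{x:M_\eta^{\rho,\sigma}f(x)>\lambda\}\bigr)\,d\lambda.\]
For each $\lambda$ decompose the level set into a maximal pairwise disjoint family of cubes $\{Q_j^\lambda\}$ satisfying
\[\lambda < \Bigl(1+\tfrac{r_j}{\rho(x_j)}\Bigr)^{-\sigma}\|f\|_{\eta,Q_j^\lambda} \leq C\lambda,\]
and bound $w(Q_j^\lambda)\leq (1+r_j/\rho(x_j))^{\theta}\int_{Q_j^\lambda} M^{\rho,\theta}w\,dx$ by using $Q_j^\lambda$ itself as a test cube for $M^{\rho,\theta}w$. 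Redistributing the $\lambda$-integral over dyadic scales $[2^k,2^{k+1})$ and estimating inside each cube via Hölder with the pair $(\eta,\tilde\eta)$, the resulting sum becomes a discretization of the integral
\[\int_0^t \frac{a(s)}{s}\,\eta'(t/s)\,ds,\]
with $t\sim C|f|$, at which point the Dini hypothesis (a) converts this into $\psi(C|f|)\,M^{\rho,\theta}w$. The parameters $\sigma$ and $\theta$ must be coupled, with $\sigma$ taken large enough in terms of $N_0$, so that the powers of $(1+r_j/\rho(x_j))$ produced along the way are reabsorbed by the decay built into $M_\eta^{\rho,\sigma}$ via the geometric control \eqref{eq: constantesRho}.

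The reverse direction (d) $\Rightarrow$ (a) is obtained by testing (d) on functions of the form $f=\sum_k t_k\chi_{Q_k}$ for a nested family of cubes $Q_0\subset Q_1\subset\cdots$ and levels $t_k$ tuned so that on each annular region $Q_k\setminus Q_{k-1}$ the Luxemburg average $\|f\|_{\eta,Q_k}$ is comparable to $t\,\eta^{-1}(|Q_0|/|Q_k|)$; this gives a lower bound for the left-hand modular in (d) matching a Riemann sum for $\int_0^t a(s)/s\,\eta'(t/s)\,ds$, while the right-hand side stays comparable to $\psi(Ct)$, yielding (a). The main obstacle throughout is (a) $\Rightarrow$ (b): orchestrating the Calderón–Zygmund decomposition in the presence of the non-doubling function $\rho$ so that the two-parameter scheme $(\sigma,\theta)$ simultaneously absorbs the critical-radius growth in \eqref{eq: constantesRho} and carries out the Dini summation that replaces Pérez's $B_p$ condition.
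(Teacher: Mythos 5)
Your implication diagram does not close the equivalence. You establish
\[
\text{(a)}\Rightarrow\text{(b)}\Rightarrow\text{(d)}\Rightarrow\text{(a)},\qquad
\text{(b)}\Rightarrow\text{(c)},\qquad
\text{(b)}\Rightarrow\text{(e)},
\]
so (a), (b) and (d) are mutually equivalent, but (c) and (e) are only shown to be \emph{necessary} consequences; you never deduce any of the other items from (c) or from (e). As it stands the scheme proves the equivalence of three of the five statements. The paper closes the loop by running
\[
\text{(a)}\Rightarrow\text{(b)}\Rightarrow\text{(c)}\Rightarrow\text{(d)}\Rightarrow\text{(a)}
\quad\text{and}\quad
\text{(b)}\Rightarrow\text{(e)}\Rightarrow\text{(a)};
\]
the step \text{(c)}\(\Rightarrow\)\text{(d)} uses the weight $w\equiv\alpha$ with $\alpha=(\int\psi(C|f|))^{-1}$ so that $\|f\|_{\psi,M^{\rho,\theta}\alpha}=1/C$, and the step \text{(e)}\(\Rightarrow\)\text{(a)} requires a genuinely new argument (testing (e) on $f=t\mathcal{X}_{B_0}$, $u=\mathcal{X}_{B_0}$ for a critical ball $B_0$ together with a carefully tailored non-trivial weight $w$ and a separate claim bounding $\sup_{B_0}M^{\rho,\theta}w$). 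You must add one or both of these closing implications, or some substitute.

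Regarding (a)\(\Rightarrow\)(b): what you sketch (an inline Calder\'on--Zygmund stopping-time decomposition of the level set of $M_\eta^{\rho,\sigma}f$) is at the right heuristic level, but the difficulty you flag --- controlling supercritical cubes so that the decay $(1+r/\rho)^{-\sigma}$ absorbs the growth permitted by \eqref{eq: constantesRho} --- is exactly where the paper invests a separate theorem. The authors first prove a weak Fefferman--Stein modular inequality (Theorem~\ref{teo: tipo debil modular de MPhi,rho}), splitting $M_\Phi^{\rho,\sigma}$ into a local piece (treated via the critical covering of Proposition~\ref{propo: cubrimiento critico}, three dyadic grids, and a dyadic CZ decomposition inside each enlarged critical cube) and a global piece (treated via Claim~\ref{af: teo: tipo debil modular de MPhi,rho - af1} and a geometric-series argument with $\sigma$ chosen large relative to $\theta$, $N_1$ and $N_0$). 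From that theorem a level-set Lemma~\ref{lema: estimacion conjunto de nivel por integral} is derived, and (a)\(\Rightarrow\)(b) then follows by a short Fubini computation as you intend. Your outline omits the global part entirely; without it one cannot specify how $\sigma$ is chosen, which is the crux of the statement.

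Two further observations. In (b)\(\Rightarrow\)(d) you invoke $M^{\rho,\theta}1\equiv 1$; this is correct. In (d)\(\Rightarrow\)(a) you test with $f=\sum_k t_k\chi_{Q_k}$ for nested cubes; this is a plausible alternative to the paper's route (which instead first passes from (d) to an estimate of the form
$\int\phi\bigl(M^{\rho,\gamma}(fu)/M_{\tilde\eta}^{\rho,\sigma-\gamma}u\bigr)\le C\int\psi(|f|)$ via H\"older with the pair $(\eta,\tilde\eta)$, and then uses $f=t\mathcal{X}_{B_0}$, $u=\mathcal{X}_{B_0}$ and Lemma~\ref{lema: maximal de la caracteristica de una bola critica}). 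To make your version work without the auxiliary $u$ you must ensure the cubes stay subcritical --- taking $Q_0$ small relative to $\rho(x_0)$ and letting $\ell(Q_0)\to 0$ --- otherwise the unrecovered $(1+r/\rho)^{-\sigma}$ decay truncates the Dini integral; also note the Luxemburg average of $t\chi_{Q_0}$ over $Q_k$ is $t/\eta^{-1}(|Q_k|/|Q_0|)$, not $t\,\eta^{-1}(|Q_0|/|Q_k|)$, and the passage from $\eta$ to $\eta'$ via $\eta'(z)\approx\eta(z)/z$ should be recorded.
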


We point out that if we further assume that $\phi$ is a Young function, we can add another condition to the characterization given above.

\begin{teo}\label{teo: equivalencias ampliado}
Let $\eta,a,b,\phi$ and $\psi$ functions satisfying the same conditions as in Theorem~\ref{teo: equivalencias}. If $\phi$ is a Young function, we have that items~\eqref{item: teo: equivalencias - item a} through~\eqref{item: teo: equivalencias - item e} are equivalent to the following condition.
\begin{enumerate}[\rm(a)]
\setcounter{enumi}{+5}
  \item \label{item: teo: equivalencias - item f} There exist  constants $C>0$ and $\sigma\geq 0$ such that
     \[\|M_\eta^{\rho,\sigma}f\|_{\phi}\leq C\|f\|_{\psi}\]
     holds for every $f\in L^{\psi}$.
\end{enumerate}
\end{teo}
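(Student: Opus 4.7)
The strategy is to insert item~\eqref{item: teo: equivalencias - item f} into the cycle of equivalences already established in Theorem~\ref{teo: equivalencias}. Since items~\eqref{item: teo: equivalencias - item a}--\eqref{item: teo: equivalencias - item e} are mutually equivalent, it suffices to prove \eqref{item: teo: equivalencias - item d}~$\Rightarrow$~\eqref{item: teo: equivalencias - item f} together with a converse from~\eqref{item: teo: equivalencias - item f} back to one of the listed items; the natural target is~\eqref{item: teo: equivalencias - item a}, the pointwise Dini condition.

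The implication \eqref{item: teo: equivalencias - item d}~$\Rightarrow$~\eqref{item: teo: equivalencias - item f} is a homogenization argument that exploits the newly available convexity of $\phi$ together with the positive homogeneity of $M_\eta^{\rho,\sigma}$. Given $f\in L^\psi$ with $0<\lambda:=\|f\|_\psi<\infty$, the Luxemburg definition gives $\int\psi(|f|/\lambda)\le 1$. Applying~\eqref{item: teo: equivalencias - item d} to $f/(C\lambda)$ and invoking positive homogeneity yields
\[
\int_{\mathbb{R}^n}\phi\!\left(\frac{M_\eta^{\rho,\sigma}f(x)}{C\lambda}\right)dx
\;\le\; C\int_{\mathbb{R}^n}\psi\!\left(\frac{|f(x)|}{\lambda}\right)dx
\;\le\; C.
\]
Since $\phi$ is a Young function with $\phi(0)=0$, convexity yields $\phi(s/K)\le\phi(s)/K$ for every $K\ge 1$. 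Choosing $K=C$ (we may assume $C\ge 1$) reduces the display to $\int\phi(M_\eta^{\rho,\sigma}f/(C^2\lambda))\le 1$, and by the definition of $\|\cdot\|_\phi$ this gives $\|M_\eta^{\rho,\sigma}f\|_\phi\le C^2\|f\|_\psi$, which is~\eqref{item: teo: equivalencias - item f}.

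For the converse \eqref{item: teo: equivalencias - item f}~$\Rightarrow$~\eqref{item: teo: equivalencias - item a}, the plan is to test the norm inequality on $f_t=t\chi_Q$, where $t>0$ and $Q$ is a cube whose radius is small compared to $\rho(x_Q)$, so that the factor $(1+r_Q/\rho(x_Q))^{-\sigma}$ stays comparable to $1$ and the critical radius contribution is neutralized. Direct computation gives $\|f_t\|_\psi=t/\psi^{-1}(1/|Q|)$, while for any cube $Q'\supseteq Q$ the Luxemburg average satisfies $\|t\chi_Q\|_{\eta,Q'}=t/\eta^{-1}(|Q'|/|Q|)$. Splitting $\mathbb{R}^n$ into dyadic shells $2^{k+1}Q\setminus 2^kQ$ around $Q$ produces a lower bound for $\int\phi(M_\eta^{\rho,\sigma}f_t)$ which, after the change of variable $s=t/\eta^{-1}(|Q'|/|Q|)$, becomes essentially $\int_0^t a(s)s^{-1}\eta'(t/s)\,ds$. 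Balancing this against $t/\psi^{-1}(1/|Q|)$ on the right-hand side of~\eqref{item: teo: equivalencias - item f} and letting the size of $Q$ vary recovers the pointwise inequality $\int_0^t a(s)s^{-1}\eta'(t/s)\,ds\le Cb(Ct)$ of~\eqref{item: teo: equivalencias - item a}.

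The main obstacle lies in this converse. Because $\phi$ is not assumed to belong to $\Delta_2$, constants cannot be absorbed cheaply when passing from the norm form in~\eqref{item: teo: equivalencias - item f} to a modular or pointwise form; in addition, the test cubes must be selected compatibly with $\rho$ so that the $\rho$-dependent factor remains uniformly controlled, and the transition from the discrete sum over dyadic shells to the continuous integral defining the Dini condition must be carried out with care to produce sharp constants. Once these bookkeeping issues are managed, the testing argument closes the cycle and completes the equivalence of~\eqref{item: teo: equivalencias - item f} with~\eqref{item: teo: equivalencias - item a}--\eqref{item: teo: equivalencias - item e}.
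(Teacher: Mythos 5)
Your implication~\eqref{item: teo: equivalencias - item d}~$\Rightarrow$~\eqref{item: teo: equivalencias - item f} is correct: applying~\eqref{item: teo: equivalencias - item d} to $f/(C\|f\|_\psi)$, using the positive homogeneity of $M_\eta^{\rho,\sigma}$, and absorbing the resulting constant on the left via the convexity inequality $\phi(s/K)\le \phi(s)/K$ (valid since $\phi(0)=0$) gives the norm bound with constant $C^2$. This is a valid alternative to the paper's route, which simply sets $w=1$ in~\eqref{item: teo: equivalencias - item c}.

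The converse is where your plan diverges badly from the paper and where the genuine gap lies. The paper does \emph{not} attempt~\eqref{item: teo: equivalencias - item f}~$\Rightarrow$~\eqref{item: teo: equivalencias - item a}; it proves~\eqref{item: teo: equivalencias - item f}~$\Rightarrow$~\eqref{item: teo: equivalencias - item d} in a few lines by normalizing $\|f\|_\psi=2$ and then invoking Lemma~\ref{lema: relacion modular y norma}, which gives $\varrho_\phi(g)\le\|g\|_\phi$ when $\|g\|_\phi\le 1$ and $\|g\|_\psi\le\varrho_\psi(g)$ when $\|g\|_\psi\ge 1$. This closes the cycle since~\eqref{item: teo: equivalencias - item d}~$\Rightarrow$~\eqref{item: teo: equivalencias - item a} is already part of Theorem~\ref{teo: equivalencias}. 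Your proposal instead re-runs a Pérez-style testing argument on $f_t=t\mathcal{X}_Q$ directly against the norm inequality, and this has two unresolved problems that you acknowledge only vaguely. First, you never pass from the norm inequality to a modular one: the Luxemburg norm of $M_\eta^{\rho,\sigma}f_t$ does not directly control $\int\phi(M_\eta^{\rho,\sigma}f_t)$ without the comparison lemma above, precisely because $\phi$ need not be doubling. Second, and more seriously, the penalty factor $(1+r/\rho(x_Q))^{-\sigma}$ sits in the \emph{numerator} of what you need to bound from below, so the contribution of dyadic shells $2^kQ$ with $2^kr_Q\gtrsim\rho(x_0)$ is damped exponentially and the sum does not reproduce the full Dini integral; in the paper's testing argument (for~\eqref{item: teo: equivalencias - item e}~$\Rightarrow$~\eqref{item: teo: equivalencias - item a}) this is neutralized by a purpose-built weight $w$, and in its~\eqref{item: teo: equivalencias - item d}~$\Rightarrow$~\eqref{item: teo: equivalencias - item a} the penalty is moved into the denominator (where it helps) by taking $\gamma=0$ in the Sawyer-form estimate. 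With $w\equiv 1$ forced in~\eqref{item: teo: equivalencias - item f} you have neither escape available, so your testing argument as sketched does not close. The short fix is to replace your converse by the paper's~\eqref{item: teo: equivalencias - item f}~$\Rightarrow$~\eqref{item: teo: equivalencias - item d}.
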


For the classical versions of the maximal operators involved, the theorems above were obtained in \cite{Perez-95-Onsuf} when $a(t)=b(t)=t^{p-1}$, $p>1$, and in \cite{Kanashiro-Pradolini} for the Euclidean case.

The next result establishes a sufficient condition in order to guarantee the continuity of $M^{\rho,\theta}$ between  weighted Zygmund spaces of $L\log L$ type (for the definition of the classes of weights see Section~\ref{section: preliminares}).

\begin{teo}\label{teo: acotacion de M en LpLogLq}
    Let $p>1$, $q\geq 0$ and $\Phi_{p,q}(t)=t^p(1+\log^+t)^q$. If $w\in A_p^{\rho}$, then there exists $\theta\geq 0$ such that $M^{\rho,\theta}$ is bounded on $L^{\Phi_{p,q}}(w)$ .
\end{teo}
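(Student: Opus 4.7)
The plan is to reduce the $L^{\Phi_{p,q}}(w)$ bound to a known weighted weak-type $(p_{0},p_{0})$ estimate for a slightly smaller exponent $p_{0}\in(1,p)$, and then to recover the full Orlicz bound through a Calder\'on-type truncation at height $\lambda$ together with the layer-cake formula. The key inputs I plan to take from the preliminaries are: (a) the openness of the class $A_{p}^{\rho}$, namely that $w\in A_{p}^{\rho}$ implies $w\in A_{p_{0}}^{\rho}$ for some $p_{0}\in(1,p)$; and (b) the associated weighted weak-type estimate
\[w\bigl(\{x\in\mathbb{R}^{n}:M^{\rho,\theta}f(x)>\lambda\}\bigr)\le\frac{C}{\lambda^{p_{0}}}\int_{\mathbb{R}^{n}}|f|^{p_{0}}\,w\]
for some $\theta=\theta(p_{0},w)\ge 0$ whenever $w\in A_{p_{0}}^{\rho}$, both of which are standard in the Schr\"odinger-adapted setting.

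Next I would perform the truncation $f=f\chi_{\{|f|\le\lambda\}}+f\chi_{\{|f|>\lambda\}}$; exploiting that $\|M^{\rho,\theta}g\|_{\infty}\le\|g\|_{\infty}$ (since $(1+r_{0}/\rho(x_{0}))^{-\theta}\le 1$) one gets the inclusion $\{M^{\rho,\theta}f>2\lambda\}\subset\{M^{\rho,\theta}(f\chi_{\{|f|>\lambda\}})>\lambda\}$. Inserting the weak-type bound into the layer-cake identity
\[\int_{\mathbb{R}^{n}}\Phi_{p,q}\!\left(\tfrac{M^{\rho,\theta}f}{2}\right)w\,dx=\int_{0}^{\infty}\Phi_{p,q}'(\lambda)\,w\bigl(\{M^{\rho,\theta}f>2\lambda\}\bigr)\,d\lambda\]
and applying Fubini yields
\[\int_{\mathbb{R}^{n}}\Phi_{p,q}\!\left(\tfrac{M^{\rho,\theta}f}{2}\right)w\,dx\le C\int_{\mathbb{R}^{n}}|f(x)|^{p_{0}}w(x)\int_{0}^{|f(x)|}\frac{\Phi_{p,q}'(\lambda)}{\lambda^{p_{0}}}\,d\lambda\,dx.\]
Since $\Phi_{p,q}'(\lambda)\asymp\lambda^{p-1}(1+\log^{+}\lambda)^{q}$ and $p>p_{0}$, a short computation (splitting the integration at $\lambda=1$ when $q>0$) gives the pointwise bound $\int_{0}^{t}\Phi_{p,q}'(\lambda)\lambda^{-p_{0}}\,d\lambda\le C\,t^{\,p-p_{0}}(1+\log^{+}t)^{q}=C\,\Phi_{p,q}(t)/t^{p_{0}}$. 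Plugging this back produces the modular inequality $\int\Phi_{p,q}(M^{\rho,\theta}f/2)w\le C\int\Phi_{p,q}(|f|)w$, and since $\Phi_{p,q}\in\Delta_{2}$ the factor $1/2$ inside the argument can be absorbed, yielding the desired boundedness in the Luxemburg norm of $L^{\Phi_{p,q}}(w)$.

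The main obstacle I anticipate is the bookkeeping of the loss parameter $\theta$: openness of $A_{p}^{\rho}$ and the weak-type estimate both introduce losses of the form $(1+r_{0}/\rho(x_{0}))^{\sigma}$ with different exponents, so the final $\theta$ must be chosen large enough to dominate all of them simultaneously. A subsidiary technical point is the convergence of $\int_{0}^{t}\Phi_{p,q}'(\lambda)\lambda^{-p_{0}}\,d\lambda$ near $\lambda=0$; this is precisely where openness of $A_{p}^{\rho}$ is genuinely used, since finiteness at the origin requires $p-1-p_{0}>-1$, i.e.\ $p_{0}<p$, and the log power $q$ is harmless as long as this gap is positive.
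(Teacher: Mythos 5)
Your proof is correct, and it takes a genuinely different route from the paper's. The paper first establishes the pointwise majorization
\[
\Phi_{p,q}\bigl(M^{\rho,\theta}f(x)\bigr)\le \Bigl[M^{\rho,\sigma}\bigl(\Phi_{p/a,q/a}(f)\bigr)(x)\Bigr]^{a},
\qquad a=p-\varepsilon,\ \theta=\sigma a,
\]
using Jensen's inequality for the convex function $\Phi_{p/a,q/a}$ together with $A_p^\rho$-openness, and then integrates against $w$ and invokes the \emph{strong} $L^{a}(w)$ bound for $M^{\rho,\sigma}$. You instead split $f$ at height $\lambda$, exploit $\|M^{\rho,\theta}g\|_\infty\le\|g\|_\infty$ to get the inclusion of level sets, apply the weighted \emph{weak} $(p_0,p_0)$ estimate, and close the argument with the layer-cake identity, Fubini, and the elementary bound $\int_0^t\Phi_{p,q}'(\lambda)\lambda^{-p_0}\,d\lambda\lesssim\Phi_{p,q}(t)/t^{p_0}$. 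Both proofs hinge on openness of $A_p^\rho$, but you only need the weak type where the paper uses the strong type; in compensation the paper's pointwise majorization is shorter and avoids the Fubini computation. Your approach also works verbatim for any Young function $\Phi$ whose lower Boyd index exceeds $p_0$, so it is slightly more flexible, whereas the paper's argument uses the specific homogeneity $\Phi_{p,q}=(\Phi_{p/a,q/a})^a$.

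One small remark: the concern you raise at the end about ``bookkeeping of the loss parameter $\theta$'' is in fact not an issue in your own argument. You choose $\theta$ once, as the exponent for which the weighted weak $(p_0,p_0)$ estimate holds (after openness), and that same $\theta$ is the one appearing throughout the truncation and layer-cake steps; nothing needs to be reconciled. Contrast this with the paper, where two distinct exponents $\sigma$ and $\theta=\sigma a$ appear and are tied together precisely by the pointwise majorization. Also be sure to state the final modular-to-norm passage explicitly: after absorbing the factor $1/2$ via $\Delta_2$ you have $\varrho_{\Phi_{p,q},w}(M^{\rho,\theta}f)\le C\,\varrho_{\Phi_{p,q},w}(f)$; normalizing $\|f\|_{\Phi_{p,q},w}=1$ and using convexity with $\Phi_{p,q}(s/C)\le\Phi_{p,q}(s)/C$ then yields $\|M^{\rho,\theta}f\|_{\Phi_{p,q},w}\le C$.
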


The following theorem contains a Fefferman-Stein weak type inequality for the operator $M_\eta^{\rho,\sigma}$.

\begin{teo}\label{teo: tipo debil modular de MPhi,rho}
    Let $w$ be a weight and $\Phi$ a Young function in $\Delta_2$. For every $\theta\geq 0$, there exist $\sigma\geq 0$ and $C>0$ such that the inequality
    \[w\left(\left\{x\in\mathbb{R}^n: M_{\Phi}^{\rho,\sigma} f(x)>\lambda\right\}\right)\leq C\int_{\mathbb{R}^n}\Phi\left(\frac{|f(x)|}{\lambda}\right)M^{\rho,\theta}w(x)\,dx\]
    holds for every positive $\lambda$.
\end{teo}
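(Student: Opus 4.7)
The plan is a critical-radius Vitali–Calderón–Zygmund covering argument. By the homogeneity of the Luxemburg functional, $\|f/\lambda\|_{\Phi,Q}=\lambda^{-1}\|f\|_{\Phi,Q}$, hence $M_\Phi^{\rho,\sigma}(f/\lambda)=\lambda^{-1}M_\Phi^{\rho,\sigma}f$, so it is enough to prove the inequality for $\lambda=1$ and then apply it to $f/\lambda$. We may assume the right hand side is finite; truncating $f$ to compactly supported functions (and using the $\Delta_2$ hypothesis to pass to the limit by monotone convergence) we may further suppose the cubes selected below have uniformly bounded radii so Vitali's lemma applies. For each $x$ in the level set $E=\{M_\Phi^{\rho,\sigma}f>1\}$ we pick a cube $Q_x=Q(x_0,r_0)\ni x$ witnessing $(1+r_0/\rho(x_0))^{-\sigma}\|f\|_{\Phi,Q_x}>1$ and extract a pairwise disjoint subfamily $\{Q_j=Q(x_j,r_j)\}$ with $E\subset\bigcup_j 5Q_j$.

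Set $A_j:=(1+r_j/\rho(x_j))^{\sigma}\geq 1$. From the very definition of the Luxemburg norm, the selection condition reads $\frac{1}{|Q_j|}\int_{Q_j}\Phi(|f|/A_j)>1$; convexity of $\Phi$ together with $\Phi(0)=0$ yields $\Phi(A_j t)\ge A_j\Phi(t)$, so
\[
\int_{Q_j}\Phi(|f|)\ge A_j\int_{Q_j}\Phi(|f|/A_j)>A_j\,|Q_j|.
\]
On the weight side, for any $y\in Q_j$ the cube $5Q_j$ is centred at $x_j$ with radius $5r_j$ and contains $y$; the definition of $M^{\rho,\theta}$ gives $M^{\rho,\theta}w(y)\ge (1+5r_j/\rho(x_j))^{-\theta}w(5Q_j)/|5Q_j|$, and using $1+5r_j/\rho(x_j)\le 5(1+r_j/\rho(x_j))$ together with $|5Q_j|=5^n|Q_j|$ we obtain
\[
w(5Q_j)\leq C_{n,\theta}\,(1+r_j/\rho(x_j))^{\theta}\,|Q_j|\,\inf_{Q_j}M^{\rho,\theta}w.
\]

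Choosing $\sigma\ge \theta$ and combining the two displays,
\[
w(5Q_j)\leq C_{n,\theta}\,(1+r_j/\rho(x_j))^{\theta-\sigma}\int_{Q_j}\Phi(|f|)\,M^{\rho,\theta}w\leq C_{n,\theta}\int_{Q_j}\Phi(|f|)\,M^{\rho,\theta}w.
\]
Summing over the pairwise disjoint $Q_j$ gives $w(E)\le C_{n,\theta}\int_{\mathbb{R}^n}\Phi(|f|)M^{\rho,\theta}w$, and replacing $f$ by $f/\lambda$ yields the theorem. The delicate point is matching the $\rho$-decay on the two sides: the $(1+r/\rho)^{\theta}$ blow-up that inevitably appears when estimating $w(5Q_j)$ from $M^{\rho,\theta}w$ by a pointwise infimum can only be absorbed by the $\sigma$-decay built into the selection of $Q_j$, which forces the quantitative relation $\sigma\ge \theta$. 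Convexity of $\Phi$ (rather than $\Delta_2$) is what powers the key absorption $\Phi(|f|)\ge A_j\Phi(|f|/A_j)$; the $\Delta_2$ hypothesis enters mainly to legitimate the truncation reduction at the start.
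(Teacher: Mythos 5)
Your proof is correct, and it follows a genuinely different (and substantially shorter) route than the paper's. The paper splits $M_\Phi^{\rho,\sigma}$ into a \emph{local} part (subcritical cubes) and a \emph{global} part (supercritical cubes with the $(\rho(x_Q)/r_Q)^\sigma$ decay), covers $\mathbb{R}^n$ by the critical cubes of Proposition~\ref{propo: cubrimiento critico}, runs a dyadic Calder\'on--Zygmund stopping-time on a fixed multiple of each critical cube for the local part (Proposition~\ref{propo: tipo debil modular en un cubo}, via Lemma~\ref{lema: MPhi local a MPhi local diadica}), and controls the global part by a geometric series keyed to the sets $S_k$ and Claim~\ref{af: teo: tipo debil modular de MPhi,rho - af1}. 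You instead cover the level set directly by a single Vitali argument, and exploit the factor $(1+r/\rho)^{-\sigma}$ twice: first, for bounded compactly supported $f$ and $\sigma>0$ it forces an a priori bound on the radii of the selected cubes (via \eqref{eq: constantesRho} one gets $r_0\lesssim(1+r_0)^{N_0/(N_0+1)}$, so $r_0$ is bounded), which legitimizes the $5r$-covering lemma; second, after the pointwise lower bound $M^{\rho,\theta}w\gtrsim(1+r_j/\rho(x_j))^{-\theta}w(5Q_j)/|Q_j|$ on $Q_j$, the surviving factor $(1+r_j/\rho(x_j))^{\theta-\sigma}$ is $\le 1$ once $\sigma\ge\theta$. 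The convexity inequality $\Phi(A_jt)\ge A_j\Phi(t)$ (valid since $A_j\ge1$ and $\Phi(0)=0$) converts the stopping condition into $|Q_j|<A_j^{-1}\int_{Q_j}\Phi(|f|)$, and the disjointness of $\{Q_j\}$ finishes.

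Two remarks. First, your approach yields a quantitatively better $\sigma$: any $\sigma\geq\max\{\theta,\varepsilon\}$ with $\varepsilon>0$ works, versus the paper's $\sigma>\max\{(\theta+N_1)/c,\,2N_0/(1-(N_0+1)c)\}$. Since $M_\Phi^{\rho,\sigma}$ decreases in $\sigma$, a smaller admissible $\sigma$ is a stronger conclusion. Second, you attribute the use of $\Delta_2$ to the truncation step, but in fact that step only needs monotone convergence: if $f_n\uparrow f$ then $M_\Phi^{\rho,\sigma}f_n\uparrow M_\Phi^{\rho,\sigma}f$, so both sides of the inequality pass to the limit without $\Delta_2$. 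Thus your argument actually eliminates the $\Delta_2$ hypothesis from the theorem; the paper needs it only because constants such as $2\cdot 9^n$ and $C_0$ accumulate inside $\Phi$ through the decomposition machinery. That machinery does, however, serve the paper in later proofs, which is a structural reason (independent of this theorem) to set it up.
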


Although this estimate is interesting by itself, it will play an important role
in the proof of Theorem~\ref{teo: equivalencias}.

\begin{coro}
If  $w\in A_1^{\rho}$, there exist $C>0$ and $\sigma\geq 0$ such that
    \[w\left(\left\{x\in\mathbb{R}^n: M_{\Phi}^{\rho,\sigma} f(x)>\lambda\right\}\right)\leq C\int_{\mathbb{R}^n}\Phi\left(\frac{|f(x)|}{\lambda}\right)w(x)\,dx.\]
\end{coro}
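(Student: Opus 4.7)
The corollary is stated as a direct consequence of Theorem~\ref{teo: tipo debil modular de MPhi,rho}, so the plan is essentially to invoke that theorem and then replace $M^{\rho,\theta}w$ on the right-hand side by a constant multiple of $w$, which is exactly what the $A_1^\rho$ condition is designed to supply.

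More precisely, I would proceed as follows. By the definition of the class $A_1^\rho$ (to be recalled in Section~\ref{section: preliminares}), for $w\in A_1^\rho$ there exists some $\theta_0\geq 0$ and a constant $C_1>0$ such that
\[
M^{\rho,\theta_0} w(x)\leq C_1\, w(x)\qquad\text{for a.e. }x\in\mathbb{R}^n.
\]
This pointwise domination is the Schrödinger analogue of the classical $A_1$ condition $Mw\leq Cw$, with an extra decay factor $(1+r_0/\rho(x_0))^{-\theta_0}$ absorbed into the definition of $M^{\rho,\theta_0}$.

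Now apply Theorem~\ref{teo: tipo debil modular de MPhi,rho} with this particular value $\theta=\theta_0$: it produces constants $\sigma\geq 0$ and $C_2>0$ such that, for every $\lambda>0$,
\[
w\bigl(\{x\in\mathbb{R}^n : M_{\Phi}^{\rho,\sigma} f(x)>\lambda\}\bigr)\leq C_2\int_{\mathbb{R}^n}\Phi\!\left(\frac{|f(x)|}{\lambda}\right) M^{\rho,\theta_0}w(x)\,dx.
\]
Substituting the $A_1^\rho$ bound inside the integral gives the desired estimate with $C=C_1 C_2$.

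There is really no obstacle here beyond consulting the precise definition of $A_1^\rho$ used in the paper; the corollary is a cosmetic specialization of Theorem~\ref{teo: tipo debil modular de MPhi,rho} to the case when the ``bad'' weight $M^{\rho,\theta}w$ on the right-hand side can be replaced by $w$ itself. The only mild care needed is to make sure that the value of $\theta_0$ furnished by the $A_1^\rho$ condition is one for which Theorem~\ref{teo: tipo debil modular de MPhi,rho} applies, but since that theorem holds for \emph{every} $\theta\geq 0$, any $\theta_0$ works.
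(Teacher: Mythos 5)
Your proof is correct and is exactly the intended argument: pick $\theta_0$ so that $w\in A_1^{\rho,\theta_0}$, apply Theorem~\ref{teo: tipo debil modular de MPhi,rho} with $\theta=\theta_0$, and replace $M^{\rho,\theta_0}w$ by $Cw$ using the pointwise bound $M^{\rho,\theta_0}w\leq Cw$ a.e.\ recalled in Section~\ref{section: preliminares}. Nothing further is needed.
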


The article is organized as follows. In Section~\ref{section: preliminares} we give the preliminaries and definitions.  Section~\ref{section: prueba cond suficiente} and~\ref{seccion: prueba debiles} will be devoted to the proof of Theorem~\ref{teo: acotacion de M en LpLogLq} and Theorem~\ref{teo: tipo debil modular de MPhi,rho}, respectively. We postpone the proof of the main results, Theorem~\ref{teo: equivalencias} and Theorem~\ref{teo: equivalencias ampliado}, to Section~\ref{section: prueba equivalencias} because they require the estimate given by Theorem~\ref{teo: tipo debil modular de MPhi,rho}.

\section{Preliminaries and definitions}~\label{section: preliminares}

Throughout the article we shall assume that $\rho\colon \mathbb{R}^n\to (0,\infty)$ is a fixed critical radius function satisfying \eqref{eq: constantesRho}. 

Let us introduce the classes of weights involved in our estimates. These types of Muckenhoupt $A_p$ classes related to $\rho$ were first defined in~\cite{BHS-classesofweights}.
Let $1<p<\infty$ and $\theta\geq 0$. We say that $w\in A_p^{\rho,\theta}$ if there exists a positive constant $C$ such that the inequality
\begin{equation}\label{eq: clase Ap,rho,theta}
\left(\frac{1}{|Q|}\int_Q w\right)^{1/p}\left(\frac{1}{|Q|}\int_Q w^{1-p'}\right)^{1/p'}\leq C\left(1+\frac{r}{\rho(x)}\right)^{\theta}
\end{equation} 
holds for every cube $Q=Q(x,r)$. The notation $Q(x,r)$ stands for a cube with sides parallel to the coordinate axes, with center $x$ and radius $r$. We shall denote $\ell(Q)$ the length of the edges of $Q$. Observe that $r=\sqrt{n}\,\ell(Q)/2$.

Similarly, $w\in A_1^{\rho,\theta}$ if there exists $C>0$ such that
\begin{equation}\label{eq: clase A1,rho,theta}
\frac{1}{|Q|}\int_Q w\leq C\left(1+\frac{r}{\rho(x)}\right)^{\theta}\inf_Q w,
\end{equation} 
for every cube $Q=Q(x,r)$.  It is well known that $A_1^{\rho,\theta}$ weights verify that $M^{\rho,\theta} w(x)\leq Cw(x)$ for almost every $x$ (see, for example, \cite{BCH12}). We also define $A_\infty^{\rho,\theta}=\bigcup_{p\geq 1} A_p^{\rho,\theta}$.

For $1\leq p\leq \infty$, the $A_p^\rho$ class is defined as the collection of all the $A_p^{\rho,\theta}$ classes for $\theta\geq 0$, that is
\[A_p^\rho=\bigcup_{\theta\geq 0} A_p^{\rho,\theta}.\]

We shall denote with $\mathcal{Q}_\rho$ the family of cubes $Q=Q(x,r)$ such that $r\leq \rho(x)$. When $r<\rho(x)$ or $r=\rho(x)$ we shall say that $Q$ is a \emph{subcritical} or a \emph{critical} cube, respectively.
Note that, if $Q\in \mathcal{Q}_\rho$, the expression $\left(1+\frac{r}{\rho(x)}\right)^\theta$ is equivalent to a constant.

We say that $\Phi\colon [0,\infty)\to [0,\infty)$ is a \emph{Young function} if it is increasing, convex, $\Phi(0)=0$ and $\Phi(t)\to\infty$ when $t\to \infty$. We shall refer to  $\Phi$ as a \emph{normalized Young function} if $\Phi(1)=1$. It is easy to check that any Young function can be normalized. We also say that $\Phi$ is \emph{doubling}, and denote it by $\Phi\in \Delta_2$, if there exists a positive constant $C$ such that 
\[\Phi(2t)\leq C\Phi(t),\]
for every $t\geq 0$.

Given a Young function $\Phi$, the \emph{complementary function} of $\Phi$ is defined by
\[\tilde\Phi(t)=\sup\{st-\Phi(s): s\geq 0\}.\]
Moreover, if $\Phi(t)/t$ is a quasi increasing function, then $\tilde \Phi$ is also a Young function. The following relation between $\Phi$ and $\tilde\Phi$ 
\begin{equation}\label{eq: relacion Phi - tilde Phi}
    \frac{t}{2}\leq \Phi^{-1}(t)\tilde\Phi^{-1}(t)\leq 2t
\end{equation}
holds for every $t>0$. The inequality above implies the 
generalized Hölder inequality
\[\int_{\mathbb{R}^n} |fg|\leq C \|f\|_{\Phi}\|g\|_{\tilde\Phi}.\]
Furthermore, for every cube $Q$ we can derive the inequality
\[\frac{1}{|Q|}\int_{Q} |fg|\leq C \|f\|_{\Phi,Q}\|g\|_{\tilde\Phi,Q}.\]

We shall be using the following equivalence in our estimations  
\begin{equation}\label{eq: relacion norma Luxemburgo con infimo}
    \|f\|_{\eta,Q}\approx \inf_{t>0}\left\{t+\frac{t}{|Q|}\int_Q\eta\left(\frac{|f|}{t}\right)\right\},
\end{equation}
that relates the Luxemburg average of $f$ with its modular version. For more information about Orlicz spaces see, for example, \cite{KR} or \cite{raoren}.

\section{Proof of Theorem~\ref{teo: acotacion de M en LpLogLq}}\label{section: prueba cond suficiente}

We shall require the openness property for $A_p^\rho$ weights, given in the following lemma. A proof can be found in \cite{BHS-classesofweights}.

\begin{lema}\label{lema: apertura clases Ap,rho}
    Let $p>1$ and $w\in A_p^\rho$. Then there exists $0<\varepsilon<p-1$ such that $w\in A_{p-\varepsilon}^\rho$.
\end{lema}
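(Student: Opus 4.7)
The plan is to mimic the classical proof of openness for Muckenhoupt $A_p$ classes, carefully tracking the critical-radius factors. Start with $w\in A_p^{\rho,\theta}$ for some $\theta\geq 0$. A direct manipulation of~\eqref{eq: clase Ap,rho,theta} shows that $\sigma:=w^{1-p'}$ belongs to $A_{p'}^{\rho,\theta}$, since swapping the roles of $w$ and $\sigma$ leaves the inequality unchanged (using $(1-p')(1-p)=1$). In particular $\sigma\in A_\infty^\rho$.

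The key ingredient I would invoke is the reverse Hölder inequality for $A_\infty^\rho$ weights: there exist $s>1$, $\theta_1\geq 0$ and $C>0$ such that
\[
\left(\frac{1}{|Q|}\int_Q \sigma^s\right)^{1/s}\leq C\left(1+\frac{r}{\rho(x)}\right)^{\theta_1}\frac{1}{|Q|}\int_Q \sigma
\]
for every $Q=Q(x,r)$. This is the $\rho$-adapted version of the classical self-improvement result and is by far the most delicate step; the expected proof proceeds via a Calderón-Zygmund decomposition in which subcritical cubes behave essentially classically while critical/supercritical cubes absorb the extra factor $(1+r/\rho(x))^{\theta_1}$. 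For the present argument I would take it as a black box from~\cite{BHS-classesofweights}.

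Now choose $\varepsilon=(p-1)(s-1)/s$, so that $0<\varepsilon<p-1$ and $(p-\varepsilon)'-1=s(p'-1)$; equivalently, $w^{1-(p-\varepsilon)'}=\sigma^s$. Applying the reverse Hölder inequality to $\sigma$, raising to the $1/(p-\varepsilon)'$ power, and using $s/(p-\varepsilon)'=(p-1)/(p-\varepsilon)$, one gets
\[
\left(\frac{1}{|Q|}\int_Q w^{1-(p-\varepsilon)'}\right)^{1/(p-\varepsilon)'}\leq C\left(1+\frac{r}{\rho(x)}\right)^{\theta_1(p-1)/(p-\varepsilon)}\left(\frac{1}{|Q|}\int_Q \sigma\right)^{(p-1)/(p-\varepsilon)}.
\]
Multiplying by $\bigl(\tfrac{1}{|Q|}\int_Q w\bigr)^{1/(p-\varepsilon)}$ and applying the $A_p^{\rho,\theta}$ hypothesis for $w$ raised to the power $p/(p-\varepsilon)$ yields
\[
\left(\frac{1}{|Q|}\int_Q w\right)^{1/(p-\varepsilon)}\left(\frac{1}{|Q|}\int_Q w^{1-(p-\varepsilon)'}\right)^{1/(p-\varepsilon)'}\leq C'\left(1+\frac{r}{\rho(x)}\right)^{\tilde\theta},
\]
with $\tilde\theta=(\theta p+\theta_1(p-1))/(p-\varepsilon)$. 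Hence $w\in A_{p-\varepsilon}^{\rho,\tilde\theta}\subset A_{p-\varepsilon}^\rho$.

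The only genuine difficulty in this scheme is the reverse Hölder step, where one must carry out an $A_\infty$ self-improvement argument while controlling the accumulation of $(1+r/\rho)^{\bullet}$ factors. Once it is available, the concluding estimate is purely algebraic and mirrors the Euclidean openness proof almost verbatim, with the critical-radius exponents combining linearly.
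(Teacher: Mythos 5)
The paper itself does not prove this lemma; it simply defers to \cite{BHS-classesofweights}, so there is no internal proof to compare against. Your outline reproduces the classical Euclidean openness argument with the $\rho$-adapted factors carried along, and the algebraic bookkeeping checks out: $\sigma = w^{1-p'}$ does lie in $A_{p'}^{\rho,\theta}$ with the same constant (the two factors in \eqref{eq: clase Ap,rho,theta} simply swap roles), your choice $\varepsilon = (p-1)(s-1)/s$ gives $w^{1-(p-\varepsilon)'}=\sigma^{s}$, the exponent identity $s/(p-\varepsilon)' = (p-1)/(p-\varepsilon)$ holds, and combining the rescaled reverse H\"older estimate with the $A_p^{\rho,\theta}$ hypothesis raised to the power $p/(p-\varepsilon)$ does produce the $A_{p-\varepsilon}^{\rho,\tilde\theta}$ condition with $\tilde\theta = (\theta p + \theta_1(p-1))/(p-\varepsilon)$. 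So the scheme is sound conditional on the black box.

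The one place where you should be more careful is precisely that black box. In \cite{BHS-classesofweights} the self-improvement machinery is built first \emph{locally}, i.e.\ a uniform classical reverse H\"older inequality on sub-critical cubes (where the factor $(1+r/\rho(x))^{\theta}$ is comparable to a constant), and the passage to arbitrary cubes is then handled by a covering argument using the critical decomposition of Proposition~\ref{propo: cubrimiento critico}, which is also essentially how they obtain the openness of the $A_p^{\rho}$ classes. Your argument presupposes a \emph{global} reverse H\"older inequality of the precise form
\[
\left(\frac{1}{|Q|}\int_Q \sigma^s\right)^{1/s}\leq C\left(1+\frac{r}{\rho(x)}\right)^{\theta_1}\frac{1}{|Q|}\int_Q \sigma
\]
for every cube. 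This is plausible and in fact derivable from the local statement plus the critical covering, but it is not the formulation you will find stated verbatim, and the derivation is not trivial (one must control the proliferation of $(1+r/\rho)^{\bullet}$ powers when re-assembling the large cube from sub-critical pieces). If you intend to cite it as a ready-made lemma you should either locate the exact statement or include the short local-to-global reduction; otherwise the argument has a genuine gap at its single load-bearing step. Your choice to flag this step as the delicate one was the right instinct; the difference from the reference is that \cite{BHS-classesofweights} effectively proves openness by localizing the whole $A_p$-openness argument to sub-critical cubes rather than by first establishing a global reverse H\"older inequality and then running the classical exponent arithmetic.
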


The following result establishes the relation between the boundedness of $M^{\rho,\theta}$ on $L^p(w)$ and the $A_p^\rho$ classes (see Proposition 3 in \cite{BCHextrapolation}).

\begin{propo}\label{propo: tipo fuerte de M^rho, theta con pesos Ap}
    Let $1<p<\infty$. A weight $w$ belongs to $A_p^\rho$ if and only if there exists $\theta\geq 0$ such that $M^{\rho,\theta}$ is bounded on $L^p(w)$.
\end{propo}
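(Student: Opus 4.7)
My plan is to prove the two implications separately.

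For the \emph{necessity} direction, I would use the standard test function argument. Fix a cube $Q=Q(x_0,r_0)$ and apply the assumed $L^p(w)$-boundedness of $M^{\rho,\theta}$ to $f=w^{1-p'}\chi_Q$. Since $Q$ itself is admissible in the supremum defining $M^{\rho,\theta}f(x)$ for each $x\in Q$, we have the pointwise lower bound
\[
M^{\rho,\theta}f(x)\geq \left(1+\frac{r_0}{\rho(x_0)}\right)^{-\theta}\frac{1}{|Q|}\int_Q w^{1-p'}.
\]
Raising to the $p$-th power, integrating against $w$ over $Q$, and combining with $\int (M^{\rho,\theta}f)^pw\leq C\int f^pw=C\int_Q w^{1-p'}$, then dividing by $\int_Q w^{1-p'}$ and taking the $p$-th root, I recover exactly~\eqref{eq: clase Ap,rho,theta} with the same parameter $\theta$. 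Hence $w\in A_p^{\rho,\theta}\subset A_p^\rho$.

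For the \emph{sufficiency} direction, starting from $w\in A_p^{\rho,\theta_0}$, I would split the supremum defining $M^{\rho,\sigma}f(x)$ according to whether the testing cube $Q(x_0,r_0)$ is subcritical ($r_0\leq\rho(x_0)$) or supercritical ($r_0>\rho(x_0)$). On subcritical cubes the factor $(1+r_0/\rho(x_0))^{-\sigma}$ is bounded, and because~\eqref{eq: clase Ap,rho,theta} restricted to $\mathcal{Q}_\rho$ is a uniform classical $A_p$ condition (constant at most $C\cdot 2^{\theta_0}$), the classical Muckenhoupt theorem applied via a standard Calder\'on--Zygmund argument controls this piece on $L^p(w)$. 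For the supercritical part I would invoke a Whitney-type covering of $\mathbb{R}^n$ by critical cubes $\{Q_k=Q(x_k,\rho(x_k))\}$ with bounded overlap, which is a standard feature of critical radius functions. Using the growth control~\eqref{eq: constantesRho}, each supercritical cube $Q=Q(x_0,r_0)$ is contained in a controlled dilate of some $Q_k$ whose edge grows polynomially in $r_0/\rho(x_0)$, and consequently $|Q|^{-1}\int_Q|f|$ is dominated, up to a factor $(1+r_0/\rho(x_0))^\alpha$ with $\alpha=\alpha(n,N_0)$, by a classical average over the enclosing dilate. Choosing $\sigma$ strictly larger than $\alpha$ and invoking the openness of $A_p^\rho$ (Lemma~\ref{lema: apertura clases Ap,rho}) to obtain a small spare exponent $\varepsilon>0$, one can sum the geometric series produced by the dyadic decomposition $\rho(x_0)2^{j-1}<r_0\leq \rho(x_0)2^j$ of supercritical scales and close the estimate.

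The main obstacle will be the bookkeeping in the supercritical piece: one has to track how the constants depend jointly on $\theta_0$, $N_0$, $p$ and the dilation index $j$, and then choose a single $\sigma=\sigma(\theta_0,N_0,p)$ that absorbs all these contributions uniformly in $j$. The self-improvement of $A_p^\rho$ provided by Lemma~\ref{lema: apertura clases Ap,rho} is what creates the gap needed to make this geometric summation converge; without it the supercritical series would only be borderline summable and the argument would fail.
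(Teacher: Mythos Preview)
The paper does not give its own proof of this proposition; it simply quotes it from \cite{BCHextrapolation} (Proposition~3 there). So there is no ``paper's proof'' to compare against, and your proposal should be judged on its own merits.

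Your necessity argument is correct and is the standard testing argument.

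Your sufficiency strategy (split $M^{\rho,\sigma}$ into a subcritical and a supercritical piece, handle the first by localizing to dilates of the critical cubes $Q_k$ from Proposition~\ref{propo: cubrimiento critico} and invoking the classical Muckenhoupt theorem there, and handle the second by a dyadic scale decomposition $2^{j-1}\rho(x_0)<r_0\le 2^j\rho(x_0)$ combined with the overlap bound of Proposition~\ref{propo: cubrimiento critico}\eqref{item: prop-cubrimientocritico - item b}) is exactly the approach used in the literature. Two points deserve sharpening. First, for the subcritical piece you should not say that the restriction of \eqref{eq: clase Ap,rho,theta} to $\mathcal{Q}_\rho$ is ``a uniform classical $A_p$ condition'': it is only a \emph{local} $A_p$ condition. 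What makes the classical theory applicable is that, by \eqref{eq: constantesRho}, every sub-cube of a fixed dilate $CQ_k$ has $r/\rho$ uniformly bounded, so $w$ restricted to $CQ_k$ satisfies the genuine $A_p$ condition on all sub-cubes with a constant depending only on $[w]_{A_p^{\rho,\theta_0}}$, $C$, $N_0$ and $\theta_0$; then the bound for $M_{\textrm{loc}}$ follows from the classical theorem applied on each $CQ_k$ and the finite overlap.

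Second, your appeal to Lemma~\ref{lema: apertura clases Ap,rho} for the supercritical summation is unnecessary and, as stated, its role is unclear. Once you have shown that for $x\in Q_k$ and $Q\in S_j$ one has $Q\subset C2^{cj}Q_k$ with $|Q|\gtrsim 2^{-c'j}|C2^{cj}Q_k|$ (all exponents depending only on $n,N_0$), the $A_p^{\rho,\theta_0}$ condition applied directly to $C2^{cj}Q_k$ gives
\[
w(Q_k)\,\langle f\rangle_{C2^{cj}Q_k}^{\,p}\le w(C2^{cj}Q_k)\,\langle f\rangle_{C2^{cj}Q_k}^{\,p}\lesssim 2^{cjp\theta_0}\int_{C2^{cj}Q_k}|f|^p w,
\]
and the overlap bound contributes another factor $2^{cjN_1}$ after summing in $k$. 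All of these are polynomial in $2^j$, so choosing $\sigma$ large (depending only on $n,N_0,N_1,\theta_0,p$) already makes the series $\sum_j 2^{-j\sigma}\cdot 2^{j(\cdots)}$ geometric. No self-improvement of the weight is needed here; openness would be the right tool if you were instead proving weak $(p,p)$ first and then interpolating, but that is not the route you describe.
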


We shall now proceed to the proof of Theorem~\ref{teo: acotacion de M en LpLogLq}. 

\begin{proof}[Proof of Theorem~\ref{teo: acotacion de M en LpLogLq}]
 Let $w\in A_p^\rho$ and $f\in L^{\Phi_{p,q}}(w)$, where $\Phi_{p,q}(t)=t^p(1+\log^+ t)^q$. We can assume, without loss of generality, that $f$ is nonnegative and $\|f\|_{\Phi_{p,q},w}=1$. By Lemma~\ref{lema: apertura clases Ap,rho}, there exists $0<\varepsilon<p-1$ such that $w\in A_{p-\varepsilon}^\rho$ and consequently, by Proposition~\ref{propo: tipo fuerte de M^rho, theta con pesos Ap}, there exists $\sigma\geq 0$ such that  $M^{\rho,\sigma}$ is bounded on $L^{p-\varepsilon}(w)$. Let $a=p-\varepsilon>1$.  We shall first prove that
\begin{equation}\label{eq: teo: acotacion de M en LpLogLq - eq1}
\Phi_{p,q}\left(M^{\rho,\theta}f(x)\right)\leq \left[M^{\rho,\sigma}\left({\Phi}_{p/a,q/a}(f)\right)(x)\right]^a,   
\end{equation}
where $\theta=\sigma a$,

Indeed, fix $x$ and $Q=Q(x_0,r_0)$ a cube containing $x$. Since both $\Phi_{p,q}$ and $ \Phi_{p/a,q/a}$ are convex functions, we have
\begin{align*}
    \Phi_{p,q}\left(\left(1+\frac{r_0}{\rho(x_0)}\right)^{-\theta}\frac{1}{|Q|}\int_Q f\right)&\leq \left(1+\frac{r_0}{\rho(x_0)}\right)^{-a\sigma}\Phi_{p,q}\left(\frac{1}{|Q|}\int_Q f\right)\\
    &= \left(1+\frac{r_0}{\rho(x_0)}\right)^{-a\sigma}\left[{\Phi}_{p/a,q/a}\left(\frac{1}{|Q|}\int_Q f\right)\right]^a\\
    &\leq \left[\left(1+\frac{r_0}{\rho(x_0)}\right)^{-\sigma}\frac{1}{|Q|}\int_Q {\Phi}_{p/a,q/a}(f)\right]^a\\
    &\leq \left[M^{\rho,\sigma}\left({\Phi}_{p/a,q/a}(f)\right)(x)\right]^a.
\end{align*}
By taking supremum over the cubes $Q$, and using the fact that $\Phi_{p,q}$ is continuous and increasing, we get the estimate \eqref{eq: teo: acotacion de M en LpLogLq - eq1}. 

We now proceed as follows
\begin{align*}
\int_{\mathbb{R}^n}\Phi_{p,q}\left(M^{\rho,\theta}f(x)\right)w(x)\,dx&\leq \int_{\mathbb{R}^n} \left[M^{\rho,\sigma}\left({\Phi}_{p/a,q/a}(f)\right)(x)\right]^aw(x)\,dx\\
&\leq C\int_{\mathbb{R}^n} \left({\Phi}_{p/a,q/a}(f(x))\right)^a w(x)\,dx\\
&=C\int_{\mathbb{R}^n} \Phi_{p,q}(f(x)) w(x)\,dx\\
&= C,
\end{align*}
where we have used that $w\in A_a^\rho$. By a simple computation we have the result.  
\end{proof}

\section{Proof of Theorem~\ref{teo: tipo debil modular de MPhi,rho}}\label{seccion: prueba debiles}

We start by giving some previous results in order to prove Theorem~\ref{teo: tipo debil modular de MPhi,rho}.
The following proposition gives a partition of $\mathbb{R}^n$ in terms of critical cubes. A proof can be found in \cite{DZ-99}.
 \begin{propo}
\label{propo: cubrimiento critico}
	There exists a sequence of points $\{x_j\}_{j\in\mathbb{N}}$ such that the family of critical cubes given by $Q_j=~Q(x_j,\rho(x_j))$  satisfies
	\begin{enumerate}[\rm(a)]
		\item \label{item: prop-cubrimientocritico - item a}$\displaystyle \bigcup_{j\in\mathbb{N}} Q_j= \mathbb{R}^n$.
		\item \label{item: prop-cubrimientocritico - item b}There exist positive constants $C$ and $N_1$ such that for any $\sigma\geq1$,
		$\displaystyle \sum_{j\in\mathbb{N}}\mathcal{X}_{\sigma Q_j}\leq C \sigma^{N_1}$. Here $\mathcal{X}_A$ denotes, as usual, the characteristic function of the set $A$.
	\end{enumerate}
 \end{propo}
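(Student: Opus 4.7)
The plan is to build the sequence $\{x_j\}$ by a Vitali-type maximal selection at the subcritical scale $\rho(x)/K$ for a large constant $K$ to be chosen, and then to read off both the covering property (a) and the bounded-overlap property (b) from the quantitative slow-variation estimate \eqref{eq: constantesRho}.

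First, I would fix $K\geq 1$ and pick a maximal countable subset $\{x_j\}\subset\mathbb{R}^n$ such that the cubes $Q(x_j,\rho(x_j)/K)$ are pairwise disjoint; existence follows from a standard greedy selection applied to a countable exhaustion of $\mathbb{R}^n$ (one keeps adding points whose associated subcritical cube avoids the already-chosen ones, using that each selected cube has positive Lebesgue measure, so the process terminates countably). For (a), given $x\in\mathbb{R}^n$, maximality forces $Q(x,\rho(x)/K)$ to meet some $Q(x_j,\rho(x_j)/K)$. Writing $|x-x_j|\leq (\rho(x)+\rho(x_j))/K$ and feeding this into both inequalities of \eqref{eq: constantesRho}, one obtains a bound of the form $\rho(x)\leq A_K\rho(x_j)$ with $A_K\to 1$ as $K\to\infty$. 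Choosing $K$ so large that $A_K/K\leq 1/2$ (say) forces $|x-x_j|\leq \rho(x_j)$, hence $x\in Q_j$.

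For (b), fix $x\in\mathbb{R}^n$ and $\sigma\geq 1$, and let $J(x)=\{j:x\in\sigma Q_j\}$. Any $j\in J(x)$ satisfies $|x-x_j|\leq c_n\sigma\rho(x_j)$. Plugging this into the upper estimate of \eqref{eq: constantesRho} yields an implicit inequality $t\leq A\sigma(1+t)^{N_0/(N_0+1)}$ for $t=|x-x_j|/\rho(x)$; since the exponent is strictly less than $1$, this forces $t\leq C\sigma^{N_0+1}$. Feeding this bound back into the lower estimate of \eqref{eq: constantesRho} gives $\rho(x_j)\geq c\sigma^{-N_0(N_0+1)}\rho(x)$. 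Therefore all the disjoint cubes $\{Q(x_j,\rho(x_j)/K)\}_{j\in J(x)}$ lie inside a fixed cube centered at $x$ of side comparable to $\sigma^{N_0+1}\rho(x)$, each one having volume at least $c\sigma^{-nN_0(N_0+1)}\rho(x)^n$. A volume comparison yields $|J(x)|\leq C\sigma^{N_1}$ with $N_1=n(N_0+1)^2$ (or a similar explicit power), which proves (b).

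The step I expect to be delicate is the calibration of $K$ in the selection argument: one must verify that the constant $A_K$ produced by unwinding \eqref{eq: constantesRho} for intersecting subcritical cubes can be made arbitrarily close to $1$ by shrinking the selection radius, so that $|x-x_j|\leq \rho(x_j)$ really holds (giving the exact covering $\bigcup Q_j=\mathbb{R}^n$, not just $\bigcup CQ_j=\mathbb{R}^n$). Once $K$ is pinned down and the fixed-point inequality $t\leq A\sigma(1+t)^{N_0/(N_0+1)}$ is solved quantitatively, the remainder of the argument is a routine volume count.
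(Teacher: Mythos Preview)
The paper does not give its own proof of this proposition; it refers the reader to \cite{DZ-99}. Your approach---a Vitali-type maximal selection at scale $\rho(x)/K$ followed by a volume comparison driven by the slow-variation estimate \eqref{eq: constantesRho}---is exactly the standard argument for this covering lemma, and the outline is correct.

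One minor imprecision in part (a): the constant $A_K$ you produce does not tend to $1$ as $K\to\infty$. If $z$ is a common point of $Q(x,\rho(x)/K)$ and $Q(x_j,\rho(x_j)/K)$, applying the upper bound of \eqref{eq: constantesRho} from $x$ to $z$ and the lower bound from $z$ to $x_j$ yields $\rho(x)\leq C_0^2(1+1/K)^{N_0+\tfrac{N_0}{N_0+1}}\rho(x_j)$, so $A_K\to C_0^2$, not $1$. This is harmless: all you need is that $A_K$ stays bounded, so that $(A_K+1)/K$ can be made smaller than any prescribed constant. In particular you need it smaller than $1/\sqrt{n}$ (not just $1$), since under the paper's convention $r_0=\sqrt{n}\,\ell(Q)/2$ the inclusion $x\in Q_j$ requires $\|x-x_j\|_\infty\leq \rho(x_j)/\sqrt{n}$. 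Your bounded-overlap count in (b) is fine as written.
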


We shall be dealing with two versions of the operator $M_\Phi$. Concretely, given a fixed cube $R$ we define
\[M_{\Phi,R}f(x)=\sup_{Q\ni x, Q\subseteq R}\|f\|_{\Phi, Q},\]
and
\[M_{\Phi,R}^{\mathscr{D}}f(x)=\sup_{Q\ni x, Q\in \mathcal{D}(R)}\|f\|_{\Phi, Q},\]
where $\mathcal{D}(R)$ denotes the family of cubes obtained from $R$ by dividing it dyadically.

By means of Proposition~\ref{propo: cubrimiento critico}, it will be enough to achieve the estimate claimed in Theorem~\ref{teo: tipo debil modular de MPhi,rho} for cubes that are a constant multiple of a critical cube. The following proposition contains this estimate for the auxiliary operator $M_{\Phi,R}^{\mathscr{D}}$.

\begin{propo}\label{propo: tipo debil modular en un cubo}
    Let $\Phi$ be a Young function, $w$ a weight and $R=Q(x_R, C\,\rho(x_R))$ be a cube, where $C>0$. Then, for every $\theta\geq 0$ there exists a positive constant $C_{n,\rho,\theta}$ such that the inequality
    \[w(\{x\in R: M_{\Phi,R}^{\mathscr{D}}f(x)>\lambda\})\leq C_{n,\rho,\theta}\int_R \Phi\left(\frac{|f(x)|}{\lambda}\right)M^{\rho,\theta}w(x)\,dx\]
    holds for every positive $\lambda$ and every bounded function $f$ with compact support.
\end{propo}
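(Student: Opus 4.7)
My plan is to prove the inequality by a Calderón--Zygmund stopping-time decomposition for the dyadic maximal function $M_{\Phi,R}^{\mathscr{D}}$ on $\mathcal{D}(R)$, and then to upgrade the resulting unweighted estimate to the weighted one by controlling the averages $w(Q)/|Q|$ of the selected cubes pointwise by $M^{\rho,\theta}w$. By positive homogeneity in $\lambda$ and $f$, it suffices to treat $\lambda=1$ and $f\geq 0$. Setting
\[\Omega=\{x\in R\colon M_{\Phi,R}^{\mathscr{D}}f(x)>1\},\]
the definition of the dyadic Luxemburg maximal function gives $\Omega=\bigcup\{Q\in\mathcal{D}(R)\colon \|f\|_{\Phi,Q}>1\}$. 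Let $\{Q_j\}_j$ be the collection of maximal elements (with respect to inclusion) of this family. Since $\mathcal{D}(R)$ has a unique top element $R$, every admissible $Q$ has a finite ancestor chain in $\mathcal{D}(R)$, so maximal elements exist, are pairwise disjoint, and cover $\Omega$.

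By the definition of the Luxemburg norm, $\|f\|_{\Phi,Q_j}>1$ forces $\tfrac{1}{|Q_j|}\int_{Q_j}\Phi(f)>1$, hence $|Q_j|\leq\int_{Q_j}\Phi(f)$. Combining with disjointness,
\[w(\Omega)\leq\sum_j w(Q_j)=\sum_j\frac{w(Q_j)}{|Q_j|}\,|Q_j|\leq\sum_j\frac{w(Q_j)}{|Q_j|}\int_{Q_j}\Phi(f(x))\,dx.\]
Writing $Q_j=Q(y_j,s_j)$ and using that $Q_j\subseteq R=Q(x_R,C\rho(x_R))$, one has $|y_j-x_R|\leq C\rho(x_R)$ and $s_j\leq C\rho(x_R)$. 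The slow-variation inequality~\eqref{eq: constantesRho} applied at $x_R$ and $y_j$ yields $\rho(y_j)\geq c\,\rho(x_R)$ for a constant $c=c(C,C_0,N_0)>0$, so $s_j/\rho(y_j)\leq K$ for some $K=K(n,C,C_0,N_0)$. Therefore, for every $x\in Q_j$,
\[M^{\rho,\theta}w(x)\geq \Bigl(1+\frac{s_j}{\rho(y_j)}\Bigr)^{-\theta}\frac{w(Q_j)}{|Q_j|}\geq (1+K)^{-\theta}\frac{w(Q_j)}{|Q_j|}.\]
Inserting this pointwise bound into the previous display and using disjointness once more,
\[w(\Omega)\leq C_{n,\rho,\theta}\sum_j\int_{Q_j}\Phi(f(x))\,M^{\rho,\theta}w(x)\,dx\leq C_{n,\rho,\theta}\int_R\Phi(f(x))\,M^{\rho,\theta}w(x)\,dx,\]
as desired.

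The only delicate step is the uniform control of $s_j/\rho(y_j)$, and it is precisely here that the hypothesis that $R$ is a constant multiple of a critical cube $Q(x_R,\rho(x_R))$ plays its role: the variation estimate~\eqref{eq: constantesRho} then makes $\rho$ comparable to $\rho(x_R)$ uniformly throughout $R$, which in turn absorbs the factor $(1+s_j/\rho(y_j))^{\theta}$ appearing in the definition of $M^{\rho,\theta}$ into a single constant $C_{n,\rho,\theta}$. Without the localization to such an $R$, this quotient could be unbounded and the argument would not close with a universal constant.
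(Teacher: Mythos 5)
Your proof is correct and follows essentially the same strategy as the paper: a Calderón--Zygmund stopping-time selection of disjoint dyadic cubes $Q_j$ with $\|f\|_{\Phi,Q_j}>\lambda$, followed by the observation that the localization to a fixed multiple of a critical cube makes $\left(1+r_j/\rho(x_j)\right)^{\theta}$ uniformly bounded via \eqref{eq: constantesRho}, so that $w(Q_j)/|Q_j|\lesssim M^{\rho,\theta}w$ on $Q_j$. The only cosmetic difference is that the paper treats the case $\|f\|_{\Phi,R}>\lambda$ separately before running the stopping time from $\Lambda_1$, whereas you absorb that case by allowing $R$ itself to be a maximal selected cube.
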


\begin{proof}
    We first consider the case $\|f\|_{\Phi,R}>\lambda$. This is equivalent to 
    \[1<\frac{1}{|R|}\int_R \Phi\left(\frac{|f|}{\lambda}\right).\]
    Then the estimate easily holds as follows
    \[w(\{x\in R: M_{\Phi,R}^{\mathscr{D}}f(x)>\lambda\})\leq w(R)\leq \frac{w(R)}{|R|}\int_R \Phi\left(\frac{|f|}{\lambda}\right)\leq (1+C)^\theta\int_R\Phi\left(\frac{|f|}{\lambda}\right)M^{\rho,\theta}w.\]
    Now assume that $\|f\|_{\Phi, R}\leq \lambda$. For $k\in\mathbb{N}$, we define
    \[\Lambda_{k}(R)=\{Q\in \mathcal{D}(R): \ell(Q)=2^{-k}\ell(R)\}.\]
    We pick those cubes $Q$ in $\Lambda_1$ such that
    \[\|f\|_{\Phi,Q}>\lambda\]
    and we bisect the others. Repeating this process indefinitely, we obtain a sequence of disjoint cubes $\{Q_j\}_j$ in $\mathcal{D}(R)$ verifying that $\|f\|_{\Phi,Q_j}>\lambda$ and also
    \[\{x\in R: M_{\Phi,R}^{\mathscr{D}}f(x)>\lambda\}=\bigcup_j Q_j.\]
    If we write $Q_j=Q(x_j, r_j)$ for every $j$, by using \eqref{eq: constantesRho} we have that
    \begin{align*}
        \left(1+\frac{r_j}{\rho(x_j)}\right)^\theta&\leq \left(1+C\frac{\rho(x_R)}{\rho(x_j)}\right)^\theta\\
        &\leq \left(1+CC_0\left(1+\frac{|x_R-x_j|}{\rho(x_R)}\right)^{N_0}\right)^\theta\\
        &\leq \left(1+CC_0\left(1+C\right)^{N_0}\right)^\theta.
    \end{align*}
    If we take $C_{n,\rho,\theta}=\left(1+CC_0\left(1+C\right)^{N_0}\right)^\theta$, we can proceed as follows
    \begin{align*}
   w\left(\left\{x\in R: M_{\Phi,R}^{\mathscr{D}}f(x)>\lambda\right\}\right)=\sum_jw(Q_j)&\leq \sum_j \frac{w(Q_j)}{|Q_j|}\int_{Q_j}\Phi\left(\frac{|f|}{\lambda}\right)\\
   &\leq C_{n,\rho,\theta} \sum_j \int_{Q_j}\Phi\left(\frac{|f|}{\lambda}\right)M^{\rho,\theta}w\\
    &\leq C_{n,\rho,\theta}\int_R\Phi\left(\frac{|f|}{\lambda}\right)M^{\rho,\theta}w.     
    \end{align*}
    Since $(1+C)^\theta\leq C_{n,\rho,\theta}$, we can conclude the desired estimate.
\end{proof}

Before stating the next lemma, we recall a useful definition for the sequel.
A \emph{dyadic grid} $\mathcal{D}$ is understood as a collection of cubes in $\mathbb{R}^n$ with the following properties:
\begin{enumerate}
	\item every cube  $Q$ in $\mathcal{D}$ verifies $\ell(Q)=2^k$, for some $k\in\mathbb{Z}$;
	\item if $P$ and $Q$ are in $\mathcal{D}$ and $P\cap Q\neq\emptyset$, then either $P\subseteq Q$ or $Q\subseteq P$;
	\item $\mathcal{D}_k=\{Q\in \mathcal{D}: \ell(Q)=2^k\}$ is a partition of $\mathbb{R}^n$, for every $k\in \mathbb{Z}$.
\end{enumerate}

The following lemma establishes an important geometric relation between cubes in $\mathbb{R}^n$ and dyadic grids (see \cite{Okikiolu}).

\begin{lema}\label{lema: cubrimiento por grillas diadicas}
	For each $1\leq i\leq 3^n$, there exist dyadic grids $\mathcal{D}^{(i)}$, such that for every cube $Q$ in $\mathbb{R}^n$ there exist an index $1\leq i_0\leq 3^n$ and a dyadic cube $Q_0\in \mathcal{D}^{(i_0)}$ with $Q\subseteq Q_0$ and $\ell(Q_0)\leq 3\ell(Q)$. 
\end{lema}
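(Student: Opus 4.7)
The statement is the classical three-lattice (Okikiolu) theorem, and the plan is to produce the $3^n$ shifted dyadic grids explicitly and then verify the covering property by a pigeonhole argument at a single scale. I would split the argument into two stages: a reduction to dimension one (by taking coordinate-wise products of one-dimensional grids), and the one-dimensional statement with three grids.

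For the one-dimensional construction, I would define, for each $\alpha \in \{0,1,2\}$, a dyadic grid $\mathcal{D}^{(\alpha)}$ of intervals of length $2^k$ whose left endpoints form, at each scale $k \in \mathbb{Z}$, a translate of $2^k \mathbb{Z}$ by an amount $t^{(\alpha)}_k$ chosen so that (i) the three shifts $t^{(0)}_k, t^{(1)}_k, t^{(2)}_k$ fill out the coset $2^k \mathbb{Z}/(2^k/3)\mathbb{Z}$ modulo $2^k$, and (ii) the collection is a genuine dyadic grid, i.e., each interval at scale $2^k$ is the disjoint union of two intervals at scale $2^{k-1}$. Given an interval $Q=[a,a+\ell)$, I would then pick the unique integer $k$ with $2^k \in [3\ell/2, 3\ell)$, so that any length-$2^k$ interval has length $\leq 3\ell(Q)$ as required. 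The condition that $Q$ is contained in some interval of $\mathcal{D}^{(\alpha)}$ at scale $k$ translates into the condition that some point of the lattice $2^k\mathbb{Z}+t_k^{(\alpha)}$ lies in $[a+\ell-2^k,a]$; the union over $\alpha$ of these three lattices has spacing $2^k/3$, and the target interval has length $2^k-\ell\geq 2^k/3$, so by pigeonhole at least one $\alpha$ works.

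For the reduction to one dimension, I would index the $n$-dimensional grids by $\alpha=(\alpha_1,\dots,\alpha_n)\in\{0,1,2\}^n$ and define $\mathcal{D}^{(\alpha)}$ as the set of product cubes $I_1^{(\alpha_1)}\times\cdots\times I_n^{(\alpha_n)}$ with all factors at the same dyadic scale. Given a cube $Q=Q_1\times\cdots\times Q_n$ of side $\ell$, I would apply the one-dimensional result to each coordinate at the common scale $k$ determined by $\ell$; coordinate-wise containment with ratio $\leq 3$ then yields $Q\subseteq Q_0$ with $\ell(Q_0)\leq 3\ell(Q)$ and $Q_0\in\mathcal{D}^{(\alpha)}$ for the collected $\alpha=(\alpha_1,\dots,\alpha_n)$.

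The main obstacle is the choice of the scale-dependent shifts $t_k^{(\alpha)}$ in item~(ii) above: a naive choice $t_k^{(\alpha)}=\alpha\cdot 2^k/3$ does not preserve the nested-halving property, because the midpoint of $[m\cdot 2^k+\alpha 2^k/3,(m+1)\cdot 2^k+\alpha 2^k/3)$ is not of the form $m'\cdot 2^{k-1}+\alpha\cdot 2^{k-1}/3$. The fix is the classical alternating-sign telescoping: choose $t_k^{(\alpha)}=\alpha\cdot\sum_{j\geq k}(-1)^{j-k}\,2^{j-1}\cdot\tfrac{2}{3}\cdot\text{(corrections)}$, or equivalently specify $\mathcal{D}^{(\alpha)}$ as the shift of the standard grid by a fixed vector whose dyadic expansion matches $\pm 1/3$. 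Verifying this compatibility is the one genuinely technical point; once it is in place, the scale-selection and pigeonhole steps are immediate, and the product argument delivers the general $n$-dimensional statement.
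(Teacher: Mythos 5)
The paper gives no proof of this lemma; it simply cites Okikiolu. Your outline --- reduce to one dimension, pick the unique scale $k$ with $2^k\in[3\ell/2,3\ell)$, and run a pigeonhole argument against the union over $\alpha$ of the lattices of left endpoints at that scale --- is the standard route, and both the scale-selection and the pigeonhole count (target interval of length $2^k-\ell\ge 2^k/3$ versus a combined lattice of gap $2^k/3$) are correct. You also correctly flag that the entire difficulty is in choosing the scale-dependent shifts so that each $\mathcal{D}^{(\alpha)}$ is genuinely nested.

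The gap is in the two concrete descriptions you offer for those shifts, and the second one is actually false, not merely vague. The series $\sum_{j\ge k}(-1)^{j-k}2^{j-1}\cdot\tfrac{2}{3}\cdot(\text{corrections})$ is not a formula (it diverges without the unspecified corrections). The alternative you call ``equivalent,'' namely that $\mathcal{D}^{(\alpha)}$ is the standard grid translated by a \emph{fixed} vector, cannot work: if the left endpoints at scale $2^k$ were $2^k\mathbb{Z}+t_\alpha$ with $t_\alpha$ constant, your pigeonhole would need the union over $\alpha$ to have gap at most $2^k/3$ at every scale, forcing $t_\alpha/2^k \bmod 1\in\{1/3,2/3\}$ for all $k\in\mathbb{Z}$, which is impossible for a fixed nonzero $t_\alpha$ since $t_\alpha/2^k\to 0$ as $k\to+\infty$. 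Concretely, with $t_\alpha\in\{0,1/3,2/3\}$ the interval $Q=[1500,\,1500+2048/3)$ has $3\ell(Q)=2048$ and is contained in no dyadic interval of length $1024$ or $2048$ from any of the three translated grids. The correct choice is genuinely scale-dependent:
\[
\mathcal{D}^{(\alpha)}=\left\{\, 2^k\bigl[\, m+(-1)^k\tfrac{\alpha}{3},\ m+1+(-1)^k\tfrac{\alpha}{3}\,\bigr) : m,k\in\mathbb{Z} \,\right\},\qquad \alpha\in\{0,1,2\}.
\]
Nesting holds because equating a left endpoint at scale $k$ with one at scale $k-1$ gives $m'=2m+(-1)^k\alpha\in\mathbb{Z}$; and at each scale the union over $\alpha$ of the left endpoints is exactly $\tfrac{2^k}{3}\,\mathbb{Z}$, so your pigeonhole then goes through, and the coordinate-wise product yields the $n$-dimensional statement as you describe.
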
 

The next lemma provides a relation between the auxiliary operators  $M_{\Phi, R}$ and $M_{\Phi, R}^{\mathscr{D}}$. Although a version for the case $\Phi(t)=t$ was proved in \cite{BPQ}, we include the details for the general case for the sake of completeness.  

\begin{lema}\label{lema: MPhi local a MPhi local diadica}
	For each $1\leq i\leq 3^n$ there exist dyadic grids $\mathcal{D}^{(i)}$  with the following property:
	for every cube $Q$ in $\mathbb{R}^n$ there exists $3^n$ dyadic cubes $Q_i\in \mathcal{D}^{(i)}$  such that
	\[M_{\Phi, Q} f(x)\leq 3^n \sum_{i=1}^{3^n}M_{\Phi, Q_i}^{\mathscr{D}}(f\mathcal{X}_{Q})(x),\]
	for every $x\in Q$. Furthermore, each $Q_i$ verifies $Q\subseteq Q_i\subseteq C Q$, where $C$ depends only on $n$.
\end{lema}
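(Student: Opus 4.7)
The plan is to combine the Okikiolu-type partition provided by Lemma~\ref{lema: cubrimiento por grillas diadicas} with a direct comparison of Luxemburg averages between nested cubes. Concretely, I would fix $Q$ and $x\in Q$ and bound $\|f\|_{\Phi,P}$ uniformly over cubes $P$ with $x\in P\subseteq Q$, reducing each such $P$ to a dyadic cube $\widetilde P$ in some grid $\mathcal D^{(i)}$ and then to a subdivision cube of a suitably chosen $Q_i$.

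The first step is to apply Lemma~\ref{lema: cubrimiento por grillas diadicas} to the cube $P$: this produces an index $i(P)\in\{1,\dots,3^n\}$ and a dyadic cube $\widetilde P\in\mathcal D^{(i(P))}$ with $P\subseteq \widetilde P$ and $\ell(\widetilde P)\le 3\ell(P)\le 3\ell(Q)$. The second step is the core Luxemburg comparison: since $P\subseteq Q$ one has $f=f\mathcal X_Q$ on $P$, and since $\Phi$ is a Young function with $\Phi(0)=0$ one has $\Phi(t/c)\le \Phi(t)/c$ for $c\ge 1$; the defining inequality of the Luxemburg norm then yields
\[
\|f\|_{\Phi,P}=\|f\mathcal X_Q\|_{\Phi,P}\le \frac{|\widetilde P|}{|P|}\,\|f\mathcal X_Q\|_{\Phi,\widetilde P}\le 3^n\,\|f\mathcal X_Q\|_{\Phi,\widetilde P}.
\]

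The third step is to select, for each $i\in\{1,\dots,3^n\}$, a cube $Q_i\in\mathcal D^{(i)}$ in such a way that every $\widetilde P$ arising in the first step with $i(P)=i$ is a subdivision cube of $Q_i$. Since all such $\widetilde P$ intersect $Q$ and have $\ell(\widetilde P)\le 3\ell(Q)$, they are contained in a common geometric dilate $\widehat Q$ of $Q$ with $\ell(\widehat Q)\le C_n\ell(Q)$. Choosing $Q_i$ to contain $\widehat Q$ ensures $\widetilde P\in\mathcal D(Q_i)$, so $\|f\mathcal X_Q\|_{\Phi,\widetilde P}\le M_{\Phi,Q_{i(P)}}^{\mathscr D}(f\mathcal X_Q)(x)$. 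Summing over $i$ to absorb all possible values of $i(P)$, combining with the comparison of the second step, and taking the supremum over admissible $P$ produce the claimed pointwise inequality.

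I expect the main obstacle to be precisely this joint selection of the $Q_i$'s with the two-sided size control $Q\subseteq Q_i\subseteq CQ$, since a generic dyadic grid need not contain any dyadic cube of size comparable to $\ell(Q)$ covering the dilate $\widehat Q$. The choice has to exploit the specific construction of the $3^n$ grids used in Lemma~\ref{lema: cubrimiento por grillas diadicas}, and this geometric matching between the enlargements $\widehat Q$ and the admissible dyadic ancestors in each grid is the heart of the argument; it is carried out for the linear case $\Phi(t)=t$ in \cite{BPQ}. The Luxemburg comparison of the second step, by contrast, is a routine consequence of the convexity of $\Phi$ and the normalization $\Phi(0)=0$, and does not interact with the geometry of the grids.
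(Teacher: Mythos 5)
Your proposal follows the same overall route as the paper: reduce a general subcube $P\subseteq Q$ to a dyadic cube via Okikiolu's lemma (Lemma~\ref{lema: cubrimiento por grillas diadicas}), compare Luxemburg averages on nested cubes, and then pick suitable $Q_i$'s absorbing all the dyadic cubes that arise. The Luxemburg comparison you carry out in step two is correct and is precisely the new ingredient the paper adds to the linear case of \cite{BPQ}: from $P\subseteq \widetilde P$ with $|\widetilde P|/|P|\le 3^n$ and $\Phi(t/c)\le\Phi(t)/c$ for $c\ge1$, one gets $\|f\mathcal X_Q\|_{\Phi,P}\le 3^n\|f\mathcal X_Q\|_{\Phi,\widetilde P}$, which is exactly what the paper does (via the equivalent modular computation).

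The genuine gap is that you declare the construction of the $Q_i$'s to be ``the heart of the argument'' and then do not carry it out, deferring to \cite{BPQ}. Two things are missing. First, before you can even pose the problem of choosing $Q_i$, you must verify that all the dyadic cubes $\widetilde P$ produced by Okikiolu's lemma land inside a \emph{fixed} concrete dilate of $Q$; you assert this (``they are contained in a common geometric dilate $\widehat Q$'') but do not prove it, whereas the paper explicitly checks $P_i\subseteq 8\sqrt n\,Q$ by a direct distance estimate using $\ell(P_i)\le 3\ell(P)\le 3\ell(Q)$. Second, and more importantly, the selection itself is simpler and more concrete than you anticipate; no deep exploitation of the $1/3$-trick construction is needed. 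The paper fixes the unique $k$ with $2^k<8\sqrt n\,\ell(Q)\le 2^{k+1}$; since every $\widetilde P\subseteq 8\sqrt n\,Q$ is dyadic with $\ell(\widetilde P)\le 3\ell(Q)<2^{k+1}$, one has $\ell(\widetilde P)\le 2^k$, so each $\widetilde P$ sits inside one of the boundedly many side-$2^k$ cubes of $\mathcal D^{(i)}$ meeting $8\sqrt n\,Q$. Taking $Q_i$ to be the smallest dyadic ancestor of these cubes and checking $8\sqrt n\,Q\subseteq Q_i\subseteq 48n\,Q$ via a short distance computation closes the argument and yields the two-sided containment $Q\subseteq Q_i\subseteq CQ$. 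Without spelling out these two checks your proposal does not constitute a complete proof, though your structure is correct.
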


\begin{proof}
    Given a cube $Q_0$ and a dyadic grid $\mathcal{D}$,  let us denote by $M_{\Phi, Q_0,\mathcal{D}}f$ the version of $M_{\Phi, Q_0}$ where the supremum is taken only for cubes in $\mathcal{D}$ contained in $Q_0$, that is
	\[M_{\Phi, Q_0,\mathcal{D}}f(x)=\sup_{Q\in \mathcal{D}, Q\subseteq Q_0}\|f\|_{\Phi,Q}.\]
	
	Fix $x\in Q$ and let $P\subseteq Q$ be a subcube containing $x$. Fixed $1\leq i\leq 3^n$, by Lemma~\ref{lema: cubrimiento por grillas diadicas} there exist a dyadic grid $\mathcal{D}^{(i)}$ and $P_i\in\mathcal{D}^{(i)}$ such that $P\subseteq P_i$ and $\ell(P_i)\leq 3\ell(P)$. We claim that $P_i\subseteq 8\sqrt{n}Q$. Indeed, if $x_Q$ denotes the center of $Q$, for $y\in P_i$ we have that
	\[|y-x_Q|\leq |y-x|+|x-x_Q|\leq \sqrt{n}\ell(P_i)+\frac{\sqrt{n}}{2}\ell(Q)< 4\sqrt{n}\ell(Q),\]
	so $P_i\subseteq B(x_Q, 4\sqrt{n}\ell(Q))\subseteq 8\sqrt{n}Q$.
	Since
 \[\frac{1}{|P|}\int_P \Phi\left(\frac{|f|\mathcal{X}_Q}{\|f\|_{\Phi, P_i}}\right)\leq \frac{3^n}{|P_i|}\int_{P_i}\Phi\left(\frac{|f|\mathcal{X}_Q}{\|f\|_{\Phi, P_i}}\right)\leq 3^n,\]
 we have that $\|f\mathcal{X}_Q\|_{\Phi,P}\leq 3^n\|f\mathcal{X}_Q\|_{\Phi,P_i}$. This yields
 \[\|f\mathcal{X}_Q\|_{\Phi,P}\leq 3^n\|f\mathcal{X}_Q\|_{\Phi,P_i}\leq 3^n M_{\Phi, 8\sqrt{n}Q, \mathcal{D}^{(i)}}(f\mathcal{X}_Q)(x)\leq 3^n\sum_{i=1}^{3^n}M_{\Phi, 8\sqrt{n}Q, \mathcal{D}^{(i)}}(f\mathcal{X}_Q)(x).\]
	
By taking supremum over the cubes $P\subseteq Q$ we arrive to
\[M_{\Phi, Q}f(x)\leq 3^n\sum_{i=1}^{3^n}M_{\Phi, 8\sqrt{n}Q,\mathcal{D}^{(i)}}(f\mathcal{X}_Q)(x),\]
for every $x\in Q$.

 Fix the unique $k\in\mathbb{Z}$ such that
    \begin{equation}\label{eq: lema: MPhi local a MPhi local diadica - eq1}
     2^k<8\sqrt{n}\ell(Q)\leq 2^{k+1}.   
    \end{equation}
    There exist at most $2^n$ cubes in $\mathcal{D}^{(i)}$ with side length $2^k$ and that intersect $8\sqrt{n}Q$. Let $Q_i$ be the smallest dyadic cube in  $\mathcal{D}^{(i)}$ that contains these cubes, which implies that $\ell(Q_i)=2^{k+1}$. We claim that
    \[8\sqrt{n}Q\subseteq Q_i\subseteq 48n \,Q.\]
    Indeed, the first inclusion is immediate. For the latter, if $y\in Q_i$ and $x\in 8\sqrt{n}Q\cap Q_i$, by \eqref{eq: lema: MPhi local a MPhi local diadica - eq1} we have
    \begin{align*}
    |x_Q-y|&\leq |x_Q-x|+|x-y|\\
    &\leq 8n\,\ell(Q)+\sqrt{n}\ell(Q_i)\\
    &< 8n\,\ell(Q)+16n\,\ell(Q)\\
    &= 24n\,\ell(Q),
    \end{align*}
    so $Q_i\subseteq B(x_Q, 24 n\ell(Q))\subseteq 48n\,Q.$
    
    By our choice of $Q_i$ we must have
	\[M_{\Phi, 8\sqrt{n}Q,\mathcal{D}^{(i)}}(f\mathcal{X}_Q)\leq M_{\Phi, Q_i,\mathcal{D}^{(i)}}(f\mathcal{X}_Q)=M_{\Phi,Q_i}^{\mathscr{D}}(f\mathcal{X}_Q).\]
	Finally,
	\[M_{\Phi, Q}f(x)\leq 3^n\sum_{i=1}^{3^n}M_{\Phi, Q_i}^{\mathscr{D}}(f\mathcal{X}_Q)(x).\qedhere\]
\end{proof}

We now proceed with the proof of the main result of this section.

\begin{proof}[Proof of Theorem~\ref{teo: tipo debil modular de MPhi,rho}]
Fixed $\theta\geq 0$, let $\sigma>0$ to be chosen later and observe that
\[M_\Phi^{\rho,\sigma}f(x)\leq \sup_{Q\in \mathcal{Q}_\rho} \|f\|_{\Phi,Q}+\sup_{Q\not\in\mathcal{Q}_\rho}\left(\frac{\rho(x_Q)}{r_Q}\right)^\sigma \|f\|_{\Phi,Q}=M_{\Phi,\rm{loc}}^{\rho}f(x)+M_{\Phi,\rm{glob}}^{\rho,\sigma}f(x),\]
recalling that $\mathcal{Q}_\rho$ is the set of subcritical cubes.
By Proposition~\ref{propo: cubrimiento critico}, there exists a sequence of critical cubes $Q_j=Q(x_j,\rho(x_j))$ that form a partition of $\mathbb{R}^n$ with controlled overlapping.  We can write
\begin{align*}
    w\left(\left\{x\in\mathbb{R}^n: M_{\Phi}^{\rho,\sigma} f(x)>\lambda\right\}\right)&\leq \sum_j w\left(\left\{x\in Q_j: M_{\Phi}^{\rho,\sigma} f(x)>\lambda\right\}\right)\\
    &\leq  \sum_j w\left(\left\{x\in Q_j: M_{\Phi, \rm{loc}}^{\rho} f(x)>\frac{\lambda}{2}\right\}\right)\\
    &\quad + \sum_j w\left(\left\{x\in Q_j: M_{\Phi, \rm{glob}}^{\rho,\sigma} f(x)>\frac{\lambda}{2}\right\}\right)\\
    &=I+II.
\end{align*}
Let us first estimate $I$. Fixed $Q_j$, there exists a cube $R_j=Q(x_j, C_\rho\, \rho(x_j))\supseteq Q_j$ such that if $Q\in\mathcal{Q_\rho}$ and $Q_j\cap Q\neq \emptyset$, then $Q\subseteq R_j$. Indeed, if $x\in Q_j\cap Q$ and $y\in Q$, by using \eqref{eq: constantesRho} we have
\begin{align*}
  |y-x_j|&\leq |y-x|+|x-x_j|\leq 2\rho(x_Q)+\rho(x_j)\leq 2^{N_0+1}C_0\rho(x)+\rho(x_j)\\
  &\leq 2^{N_0+2}C_0^2\rho(x_j)+\rho(x_j)
  \leq (2^{N_0+2}C_0^2+1)\rho(x_j),
\end{align*}
which implies that $y\in B(x_j, (2^{N_0+2}C_0^2+1)\rho(x_j))\subseteq Q(x_j, C_\rho \rho(x_j))$, with $C_\rho=\sqrt{n}(2^{N_0+2}C_0^2+1)$. Therefore, if $x\in Q_j$ by Lemma~\ref{lema: MPhi local a MPhi local diadica} we have 
\[M_{\Phi, \rm{loc}}^\rho f(x)\leq \sup_{Q\subseteq R_j}\|f\|_{\Phi, Q}=M_{\Phi, R_j}f(x)\leq 3^n\sum_{i=1}^{3^n} M_{\Phi,R_{i,j}}^{\mathscr{D}}f(x),\]
where $R_j\subseteq R_{i,j}\subseteq C_n R_j$, for every $i$. By virtue of Proposition~\ref{propo: tipo debil modular en un cubo} we obtain
\begin{align*}
  I&\leq \sum_j\sum_{i=1}^{3^n}w\left(\left\{x\in R_{i,j}: M_{\Phi,R_{i,j}}^{\mathscr{D}}f(x)>\frac{\lambda}{2\cdot 9^n} \right\}\right)\\
  &\leq C_{n,\rho,\theta}\sum_j \int_{R_{i,j}}\Phi\left(\frac{|f|}{\lambda}\right)M^{\rho,\theta}w \\
  & \lesssim \sum_j \int_{C_{n,\rho}Q_j}\Phi\left(\frac{|f|}{\lambda}\right)M^{\rho,\theta}w\\
  &\lesssim  \int_{\mathbb{R}^n}\Phi\left(\frac{|f|}{\lambda}\right)M^{\rho,\theta}w,
\end{align*}
where we have also used that $\Phi\in \Delta_2$ and item~\eqref{item: prop-cubrimientocritico - item b} of Proposition~\ref{propo: cubrimiento critico}.

We now estimate $II$. Fix $j$ and $x\in Q_j$. For $k\in\mathbb{N}$, we consider the sets
\[S_k=\{Q=Q(x_Q, r_Q)\ni x: b^{k-1}\rho(x_Q)<r_Q\leq b^k \rho(x_Q)\},\]
where $b=4^{N_0}$.
\begin{afirmacion}\label{af: teo: tipo debil modular de MPhi,rho - af1}
    There exist constants $C_1$ and $C_2$ greater than one such that  
    \[C_1^{-1}b^{-N_0k}\rho(x_j)\leq \rho(x_Q)\leq C_1b^{N_0k}\rho(x_j),\]
 and $Q\subseteq C_2d^kQ_j$, for every $Q\in S_k$ and $d=b^{N_0+1}$.
\end{afirmacion}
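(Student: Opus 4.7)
The plan is to deduce both parts of the claim from \eqref{eq: constantesRho}, using as a bridge any point $x\in Q_j\cap Q$, which exists because every cube in $S_k$ contains $x$ and $x\in Q_j$.

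First I would compare $\rho(x)$ with $\rho(x_j)$. Since $x\in Q_j=Q(x_j,\rho(x_j))$ we have $|x-x_j|\leq \rho(x_j)$ (recall that the ``radius'' convention $r=\sqrt{n}\,\ell(Q)/2$ makes each cube contain the Euclidean ball of the same radius), so \eqref{eq: constantesRho} immediately gives $\rho(x)\approx \rho(x_j)$ with constants depending only on $C_0$ and $N_0$. Next I would compare $\rho(x_Q)$ with $\rho(x)$: from $x\in Q$ we have $|x-x_Q|\leq r_Q\leq b^k\rho(x_Q)$, so applying \eqref{eq: constantesRho} to the pair $(x_Q,x)$ yields
\[C_0^{-1}(1+b^k)^{-N_0}\rho(x_Q)\leq \rho(x)\leq C_0(1+b^k)^{N_0/(N_0+1)}\rho(x_Q).\]
Since $b\geq 1$ we have $1+b^k\leq 2b^k$; chaining with the previous step and replacing the tighter upper exponent $N_0/(N_0+1)$ by the larger $N_0$ to get a symmetric formulation produces
\[C_1^{-1}b^{-N_0 k}\rho(x_j)\leq \rho(x_Q)\leq C_1 b^{N_0 k}\rho(x_j)\]
for a suitable constant $C_1>1$ depending only on $C_0$ and $N_0$.

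For the inclusion, I would apply the triangle inequality to any $y\in Q$:
\[|y-x_j|\leq |y-x_Q|+|x_Q-x|+|x-x_j|\leq 2r_Q+\rho(x_j).\]
Using $r_Q\leq b^k\rho(x_Q)$ together with the upper bound already obtained gives $r_Q\leq C_1 b^{(N_0+1)k}\rho(x_j)=C_1 d^k\rho(x_j)$, whence $|y-x_j|\leq (2C_1+1)d^k\rho(x_j)$. Passing from this Euclidean bound to the cube $Q(x_j,C_2 d^k\rho(x_j))$ costs the usual factor $\sqrt{n}$, and the inclusion $Q\subseteq C_2 d^k Q_j$ holds with $C_2=\sqrt{n}(2C_1+1)$.

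The argument is essentially bookkeeping around the asymmetric inequality \eqref{eq: constantesRho}; I anticipate no genuine obstacle, only the cosmetic step of replacing the tighter upper exponent $N_0/(N_0+1)$ by $N_0$ so that both bounds on $\rho(x_Q)$ share the common growth $b^{N_0 k}$, matching the definition $d=b^{N_0+1}$ in the statement.
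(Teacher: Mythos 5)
Your proof is correct, and it takes a genuinely different route for the two-sided bound on $\rho(x_Q)$. The paper applies \eqref{eq: constantesRho} directly to the pair $(x_j,x_Q)$; since $|x_Q-x_j|\leq r_Q+r_j\leq b^k\rho(x_Q)+\rho(x_j)$, the resulting inequality is self-referential in $\rho(x_Q)$, and the paper resolves it by splitting into the cases $b^k\rho(x_Q)/\rho(x_j)>2$ and $\leq 2$, extracting $\rho(x_Q)$ from both sides of the rearranged inequality. You instead route through the common point $x\in Q_j\cap Q$: comparing $\rho(x)$ with $\rho(x_j)$ and with $\rho(x_Q)$ via \eqref{eq: constantesRho} gives two non-circular estimates that chain directly, with no case analysis. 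This is cleaner and even delivers a slightly tighter lower bound (exponent $N_0/(N_0+1)$ rather than $N_0$) before you deliberately loosen it to match the symmetric statement. The inclusion $Q\subseteq C_2 d^k Q_j$ is handled the same way as in the paper, by the triangle inequality from $x_j$ through $x$ and $x_Q$.

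One small slip worth fixing: your parenthetical about the radius convention is stated backwards. With $r=\sqrt{n}\,\ell(Q)/2$, the cube $Q(x_0,r)$ is \emph{contained in} the Euclidean ball $B(x_0,r)$, not the other way around; it is precisely this containment that yields $|x-x_j|\leq\rho(x_j)$ for $x\in Q_j$ and $|x-x_Q|\leq r_Q$ for $x\in Q$. The inequalities you use are right, only the justification is misstated.
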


From this claim we get that $\|f\|_{\Phi,Q}\leq Cb^{2N_0kn}\|f\|_{\Phi, C_2d^k Q_j}$. Indeed, observe that
\begin{align*}
    \frac{1}{|Q|}\int_Q\Phi\left(\frac{|f|}{\|f\|_{\Phi, C_2d^k Q_j}}\right)&\leq \frac{|C_2d^k Q_j|}{|Q|}\frac{1}{|C_2d^kQ_j|}\int_{C_2d^kQ_j}\Phi\left(\frac{|f|}{\|f\|_{\Phi,C_2d^k Q_j}}\right)\\
    &\leq (C_2d^k)^n \left(\frac{r_j}{r_Q}\right)^n
    \leq (C_2d^k)^n \left(\frac{\rho(x_j)}{\rho(x_Q)}b^{1-k}\right)^n\\
    &\leq (C_2d^k)^n \left(C_1b^{N_0k}b^{1-k}\right)^n\\
    &=(C_1C_2b)^n b^{2N_0kn}.
\end{align*}
Therefore, we can write
\begin{align*}
    \sup_{Q\not\in \mathcal{Q}_\rho}\left(\frac{\rho(x_Q)}{r_Q}\right)^{\sigma}\|f\|_{\Phi,Q}&\leq \sup_{k\geq 1}\sup_{Q\in S_k}b^{(1-k)\sigma}\|f\|_{\Phi,Q}\\
    &\leq \sup_{k\geq 1} Cb^{2N_0kn}b^{(1-k)\sigma}\|f\|_{\Phi,C_2d^kQ_j}\\
    &\leq C b^{\sigma} \sum_{k=1}^{\infty} b^{2N_0kn}b^{-k\sigma} \|f\|_{\Phi,C_2d^kQ_j}.
\end{align*}
Notice that if we take $0<c<1/(N_0+1)$ and $\sigma>2N_0/(1-(N_0+1)c)$ we obtain that $b^{2N_0kn}b^{-k\sigma}\leq d^{-ck\sigma}$. This allows us to conclude that

\begin{equation}\label{eq: teo: tipo debil modular de MPhi,rho - eq1}
  M^{\rho,\sigma}_{\Phi,\textup{glob}}f(x)  \leq C \sum_{k\geq 1} d^{-kc\sigma}\|f\|_{\Phi, C_2d^kQ_j}=C \sum_{k\geq 1} d^{-kc\sigma}\|f\|_{\Phi, Q_j^k}
	 =A_j.  
\end{equation}
Let $J_1=\{j\in\mathbb{N}: A_j>\lambda/2\}$ and $J_2=\mathbb{N}\setminus J_1$. Since
 \[ w\left(\left\{x\in Q_j: M_{\Phi, \rm{glob}}^{\rho,\sigma} f(x)>\frac{\lambda}{2}\right\}\right)\leq  w\left(\left\{x\in Q_j: A_j>\frac{\lambda}{2}\right\}\right),\]
we have that $II\leq \sum_{j\in J_1}w(Q_j)$.

For every $j\in J_1$ we have that 
\[\frac{1}{|Q_j^k|}\int_{Q_j^k}\Phi\left(\frac{|f|}{C_0\lambda}\right)>1,\]
provided we pick  
\[C_0=\frac{d^{c\sigma}-1}{2C},\]
where $C$ is the constant appearing in \eqref{eq: teo: tipo debil modular de MPhi,rho - eq1}.
Indeed, if this inequality does not hold, we would get $\|f\|_{\Phi, Q_j^k}\leq C_0\lambda$, which yields
\[A_j\leq CC_0\lambda \sum_{k\geq 1} d^{-ck\sigma}\leq \frac{\lambda}{2},\]
which is a contradiction. Therefore, by virtue of \eqref{eq: relacion norma Luxemburgo con infimo} we can proceed as follows 
\begin{align*}
   II\leq \sum_{j\in J_1} w(Q_j)&\leq \frac{2C}{\lambda}\sum_{j\in J_1}\sum_{k\geq 1}d^{-kc\sigma}w(Q_j)\|f\|_{\Phi, Q_j^k}\\
     &\lesssim \frac{2C}{\lambda}\sum_{j\in J_1}w(Q_j)\sum_{k\geq 1}d^{-kc\sigma}\left(C_0\lambda+\frac{C_0\lambda}{|Q_j^k|}\int_{Q_j^k}\Phi\left(\frac{|f|}{C_0\lambda}\right)\right)\\
     &\leq 4CC_0\sum_{j\in J_1}\sum_{k\geq 1}d^{-kc\sigma}\frac{w(Q_j^k)}{|Q_j^k|}\int_{Q_j^k}\Phi\left(\frac{|f|}{C_0\lambda}\right)\\
     &\leq C\sum_{j\in J_1}\sum_{k\geq 1}d^{-kc\sigma}\left(1+\frac{r_{Q_j^k}}{\rho(x_j)}\right)^{\theta}\int_{Q_j^k}\Phi\left(\frac{|f|}{\lambda}\right)M^{\rho,\theta}w\\
     &\leq C\sum_{k\geq 1}d^{-kc\sigma}(1+C_2d^k)^\theta\int_{\mathbb{R}^n}\Phi\left(\frac{|f|}{\lambda}\right)\left(\sum_{j}\mathcal{X}_{Q_j^k}\right)M^{\rho,\theta}w.
\end{align*}

 By applying part~\eqref{item: prop-cubrimientocritico - item b} of Proposition~\ref{propo: cubrimiento critico}, we have that
\[II \lesssim \sum_{k\geq 1}d^{-kc\sigma+k\theta+kN_1}\int_{\mathbb{R}^n}\Phi\left(\frac{|f|}{\lambda}\right)M^{\rho,\theta}w.\]
Choosing $\displaystyle \sigma>\max\left\{\frac{\theta+N_1}{c}, \frac{2N_0}{1-(N_0+1)c}\right\}$ we conclude that 
\[II\leq C\int_{\mathbb{R}^n}\Phi\left(\frac{|f|}{\lambda}\right)M^{\rho,\theta}w,\]
and the proof is complete.
\end{proof}

We finish the section with the proof of Claim~\ref{af: teo: tipo debil modular de MPhi,rho - af1}.

\begin{proof}[Proof of Claim~\ref{af: teo: tipo debil modular de MPhi,rho - af1}]
    By means of \eqref{eq: constantesRho} we have that
    \begin{equation*}
      \rho(x_Q)\geq C_0^{-1}\rho(x_j)\left(1+\frac{|x_Q-x_j|}{\rho(x_j)}\right)^{-N_0}.  
    \end{equation*}
    On the other hand, since $Q\in S_k$ we can estimate
    \[|x_Q-x_j|\leq |x_Q-x|+|x-x_j|\leq r_Q+r_j\leq b^k\rho(x_Q)+\rho(x_j),\]
    and replacing on the inequality above we get
    \begin{equation}\label{eq: af: teo: tipo debil modular de MPhi,rho - af1 - eq1}
      \rho(x_Q)\geq C_0^{-1}\rho(x_j)\left(2+b^k\frac{\rho(x_Q)}{\rho(x_j)}\right)^{-N_0}.  
    \end{equation}
    If $2<b^k\rho(x_Q)/\rho(x_j)$, the estimate above leads us to
    \[\rho(x_j)\leq \left(2C_0^{1/N_0}b^k\right)^{N_0/(N_0+1)}\rho(x_Q)\leq 2C_0b^k\rho(x_Q).\]
    If we now assume that $b^k\rho(x_Q)/\rho(x_j)\leq 2$, we straightforwardly get from \eqref{eq: af: teo: tipo debil modular de MPhi,rho - af1 - eq1} that
    \[\rho(x_j)\leq 4^{N_0}C_0\rho(x_Q)\leq 2C_0b^k \rho(x_Q).\]
    On the other hand, again by \eqref{eq: constantesRho} and the fact that $Q\in S_k$ we have that
\begin{align*}
  \rho(x_Q)&\leq C_0\rho(x_j)\left(1+\frac{r_Q}{\rho(x_Q)}+\frac{r_j}{\rho(x_Q)}\right)^{N_0}\\
  &\leq C_0\rho(x_j)\left(2b^k+\frac{\rho(x_j)}{\rho(x_Q)}\right)^{N_0}.
\end{align*}
If $\rho(x_j)\leq 2b^k\rho(x_Q)$, we obtain $\rho(x_Q)\leq C_0(4b^k)^{N_0}\rho(x_j)$. If not, we arrive to 
\[\rho(x_Q)\leq 2^{N_0}C_0\rho(x_j)\left(\frac{\rho(x_j)}{\rho(x_Q)}\right)^{N_0},\]
 which after a rearrangement leads to $\rho(x_Q)\leq (2^{N_0}C_0)^{1/(N_0+1)}\rho(x_j)$. Therefore,
 \[\rho(x_Q)\leq C_0(4b^k)^{N_0}\rho(x_j).\]
 These estimates give us the first part of the claim by taking $C_1=4^{N_0}C_0$.
 
Now, for $z\in Q$ we have
\[|z-x_j|\leq |z-x|+|x-x_j|\leq 2r_Q+r_j\leq 2b^{k}\rho(x_Q)+\rho(x_j)\leq  (2C_1b^{k(N_0+1)}+1)\rho(x_j)\leq 4C_1b^{k(N_0+1)}\rho(x_j).\]
So, we obtain $z\in B(x_j, 4C_1b^{k(N_0+1)}\rho(x_j))\subseteq Q(x_j, 4\sqrt{n}C_1b^{k(N_0+1)}\rho(x_j))=4\sqrt{n}C_1b^{k(N_0+1)}Q_j$, so we can take $C_2=4\sqrt{n}C_1$.
\end{proof}

\section{Proof of Theorem~\ref{teo: equivalencias}}\label{section: prueba equivalencias}

We start this section by stating and proving some results that will be required for the proof of the main theorem. The first one is a consequence of Theorem~\ref{teo: tipo debil modular de MPhi,rho}.

\begin{lema}\label{lema: estimacion conjunto de nivel por integral}
Let $\varphi$ be a differentiable Young function in $\Delta_2$ and $w$ be a weight. For every $\theta\geq 0$, there exist positive constants $C$, $\sigma$ and $t_0>1$ such that the inequality
\[w(\{x\in\mathbb{R}^n: M_\varphi^{\rho,\sigma}f(x)>\lambda\})\leq C\int_{t_0}^\infty M^{\rho,\theta}w\left(\left\{x\in\mathbb{R}^n: 4t_0|f(x)|>\lambda s\right\}\right)\varphi'(s)\,ds\]
holds for every $\lambda>0$.
\end{lema}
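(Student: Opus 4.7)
The strategy is to reduce the weak-type estimate for $M_\varphi^{\rho,\sigma}f$ to Theorem~\ref{teo: tipo debil modular de MPhi,rho} applied to a truncated piece of $f$, and then convert the resulting modular quantity into the stated distributional integral via Fubini's theorem. Assuming without loss of generality that $\varphi(1)=1$, I would split $f=f_1+f_2$ with $f_1=f\mathcal{X}_{\{|f|\leq\lambda/2\}}$ and $f_2=f\mathcal{X}_{\{|f|>\lambda/2\}}$. Since $|f_1|\leq\lambda/2$ and $\varphi$ is normalized, $\|f_1\|_{\varphi,Q}\leq\lambda/2$ for every cube $Q$, so $M_\varphi^{\rho,\sigma}f_1\leq\lambda/2$ pointwise, and subadditivity of the maximal operator yields $\{M_\varphi^{\rho,\sigma}f>\lambda\}\subseteq\{M_\varphi^{\rho,\sigma}f_2>\lambda/2\}$.

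Next, I would fix $t_0>1$, so that $\lambda/2>\lambda/(4t_0)$ and the level set above is contained in $\{M_\varphi^{\rho,\sigma}f_2>\lambda/(4t_0)\}$. Applying Theorem~\ref{teo: tipo debil modular de MPhi,rho} to $f_2$ at the level $\lambda/(4t_0)$ provides $\sigma\geq 0$ (depending on $\theta$) and $C>0$ with, writing $d\mu=M^{\rho,\theta}w\,dx$,
\[w\bigl(\{M_\varphi^{\rho,\sigma}f>\lambda\}\bigr)\leq C\int_{\{|f|>\lambda/2\}}\varphi(4t_0|f|/\lambda)\,d\mu.\]
The decisive step is a pointwise convexity estimate: on $\{|f|>\lambda/2\}$ we have $4t_0|f|/\lambda>2t_0$, and convexity of $\varphi$ with $\varphi(0)=0$ gives $\varphi(2t_0)\geq 2\varphi(t_0)$, whence
\[\varphi(4t_0|f|/\lambda)\leq 2\bigl[\varphi(4t_0|f|/\lambda)-\varphi(t_0)\bigr].\]
Since the right-hand side is nonnegative on the larger set $\{|f|>\lambda/4\}$, the integration may be extended there, and Fubini's theorem then identifies
\[\int_{\{|f|>\lambda/4\}}\bigl[\varphi(4t_0|f|/\lambda)-\varphi(t_0)\bigr]\,d\mu=\int_{t_0}^\infty\varphi'(s)M^{\rho,\theta}w\bigl(\{4t_0|f|>\lambda s\}\bigr)\,ds,\]
by writing $\varphi(4t_0|f|/\lambda)-\varphi(t_0)=\int_{t_0}^{4t_0|f|/\lambda}\varphi'(s)\,ds$ on $\{|f|>\lambda/4\}$ and swapping the order of integration.

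The main obstacle is calibrating the truncation level against the factor $4t_0$ so that the convexity inequality yields a genuine factor-of-two gap. Truncating at $\lambda/2$ (rather than $\lambda/4$) is precisely what ensures $4t_0|f|/\lambda>2t_0$ on the support of $f_2$, which is what allows $\varphi(2t_0)\geq 2\varphi(t_0)$ to absorb the subtracted term $\varphi(t_0)$; coarser truncations collapse this inequality at the boundary of the relevant set and cause the argument to fail.
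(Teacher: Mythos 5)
Your proof is correct and follows essentially the same route as the paper's: truncate $f$ to its high part $f_2$, control $M_\varphi^{\rho,\sigma}f_1$ pointwise via the Luxemburg norm, invoke Theorem~\ref{teo: tipo debil modular de MPhi,rho} on $f_2$, and then use $\varphi(u)=\int_0^u\varphi'(s)\,ds$ together with Fubini to pass from the modular expression $\int\varphi(\cdot)M^{\rho,\theta}w$ to the stated distributional integral. The only variance is cosmetic: the paper applies Fubini first, obtaining an integral over $(0,\infty)$, and then disposes of the portion over $(0,t_0)$ by splitting at $2t_0$ and comparing the resulting constant to the integral over $[t_0,2t_0]$; you instead subtract $\varphi(t_0)$ at the outset, using the same convexity inequality $\varphi(2t_0)\geq 2\varphi(t_0)$ to absorb the subtraction, so that Fubini produces the lower limit $t_0$ directly. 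Both manipulations rest on the same convexity fact and yield the same constant $4t_0$; your version avoids the two-piece split, at the small cost of having to argue that the modified integrand stays nonnegative on $\{|f|>\lambda/4\}$ before extending the domain.
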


\begin{proof}
    Since $\varphi$ is a Young function, there exists $t_0>1$ such that $\varphi(t_0)>0$. Fix $\lambda>0$ and $\theta\geq 0$. 
    We write $f=f_1+f_2$, where $f_1=f\mathcal{X}_{\{|f|\leq \lambda\}}$. Then, by Theorem~\ref{teo: tipo debil modular de MPhi,rho} there exist $C,\sigma>0$ such that
    \begin{align*}
        w\left(\left\{x\in\mathbb{R}^n: M_{\varphi}^{\rho,\sigma} f(x)>2\lambda\right\}\right)&\leq w\left(\left\{x\in\mathbb{R}^n: M_{\varphi}^{\rho,\sigma} f_2(x)>\lambda\right\}\right)\\
        &\leq C\int_{\{|f|>\lambda\}}\varphi\left(\frac{|f|}{\lambda}\right)M^{\rho,\theta}w.
    \end{align*}
Observe that
\begin{align*}
  \int_{\{|f|>\lambda\}}\varphi\left(\frac{|f|}{\lambda}\right)M^{\rho,\theta}w&\leq \int_0^\infty M^{\rho,\theta}w(\{x: |f(x)|/\lambda>\max\{1,s\}\})\varphi'(s)\,ds\\
  &=\int_0^{2t_0}+\int_{2t_0}^\infty\\
  &=I_1+I_2.
\end{align*}
For $I_1$ we have
\begin{align*}
  I_1\leq M^{\rho,\theta}w(\{x: |f(x)|>\lambda\})\int_0^{2t_0}\varphi'(s)\,ds&\leq C\int_{t_0}^{2t_0} M^{\rho,\theta}w(\{x: 2t_0|f(x)|>\lambda s\})\varphi'(s)\,ds\\
  &\leq C\int_{t_0}^\infty M^{\rho,\theta}w(\{x: 2t_0|f(x)|>\lambda s\})\varphi'(s)\,ds.
\end{align*}
On the other hand, since $2t_0>1$, it is clear that
\[I_2\leq \int_{2t_0}^\infty M^{\rho,\theta}w(\{x: |f(x)|>\lambda s\})\varphi'(s)\,ds\leq \int_{2t_0}^\infty M^{\rho,\theta}w(\{x: 2t_0|f(x)|>\lambda s\})\varphi'(s)\,ds.\]
Therefore we conclude that
\[ w\left(\left\{x\in\mathbb{R}^n: M_{\varphi}^{\rho,\sigma} f(x)>2\lambda\right\}\right)\leq C\int_{2t_0}^\infty  M^{\rho,\theta}w(\{x: 2t_0|f(x)|>\lambda s\})\varphi'(s)\,ds,\]
which yields the thesis by taking $\lambda$ instead of $2\lambda$.
\end{proof}

\medskip

Let us introduce the centred version of \eqref{eq: Maximal generalizada} over balls. Given a Young function $\varphi$ and $f\in L^{\varphi}_{\rm{loc}}$ we define
\[\mathcal{M}_{\varphi,c}^{\rho,\sigma} f(x)=\sup_{r>0}\left(1+\frac{r}{\rho(x)}\right)^{-\sigma}\|f\|_{\varphi, B(x,r)},\]
where $B(x,r)$ stands for the ball centred at $x$ with radius $r$. Particularly, when $\varphi(t)=t$ we write $\mathcal{M}_{\varphi,c}^{\rho,\sigma}=\mathcal{M}_{c}^{\rho,\sigma}$. It is easy to check that
\begin{equation}\label{eq: equivalencia entre centrada y no centrada}
    C_1 \mathcal{M}_{\varphi,c}^{\rho,\sigma} f(x)\leq \mathcal{M}_{\varphi}^{\rho,\sigma}f(x)\leq C_2 \mathcal{M}_{\varphi,c}^{\rho,\sigma/(N_0+1))} f(x),
\end{equation}
where $N_0$ is the constant appearing in \eqref{eq: constantesRho} and
\[\mathcal{M}_{\varphi}^{\rho,\sigma} f(x)=\sup_{B=B(x_B,r_B)\ni x}\left(1+\frac{r_B}{\rho(x_B)}\right)^{-\sigma}\|f\|_{\varphi, B}\]

(see, for example, \cite{BCHextrapolation}).

The following estimate of maximal operators applied to characteristic functions will be useful in the sequel.

\begin{lema}\label{lema: maximal de la caracteristica de una bola critica}
Let $\varphi$ be a Young function, $B_0=B(x_0, \rho(x_0))$ be a critical ball and $x\not\in 2B_0$. For every $\sigma>0$ we have that 
\begin{enumerate}[\rm(a)]
    \item\label{item: lema: maximal de la caracteristica de una bola critica - item a} $\displaystyle \mathcal{M}_c^{\rho,\sigma}(\mathcal{X}_{B_0})(x)\gtrsim \left(\frac{\rho(x_0)}{|x-x_0|}\right)^{n+\sigma(N_0+1)}$ and
    \item\label{item: lema: maximal de la caracteristica de una bola critica - item b} $\displaystyle \mathcal{M}^{\rho,\sigma}_{\varphi,c}(\mathcal{X}_{B_0})(x)\lesssim \left(\frac{\rho(x_0)}{|x-x_0|}\right)^{\sigma/(N_0+1)}\left[\varphi^{-1}\left(\left(\frac{|x-x_0|}{\rho(x_0)}\right)^n\right)\right]^{-1},$
\end{enumerate}
being $N_0$ the constant appearing in \eqref{eq: constantesRho}.
\end{lema}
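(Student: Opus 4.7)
For part \eqref{item: lema: maximal de la caracteristica de una bola critica - item a}, my plan is to test the defining supremum of $\mathcal{M}_c^{\rho,\sigma}(\mathcal{X}_{B_0})(x)$ on the single ball $B := B(x, 2|x-x_0|)$. Since $x\notin 2B_0$ means $|x-x_0|\geq 2\rho(x_0)$, a direct application of the triangle inequality gives $B_0\subseteq B$, so the average of $\mathcal{X}_{B_0}$ over $B$ equals $|B_0|/|B|$ and produces the factor $(\rho(x_0)/|x-x_0|)^n$ up to a dimensional constant. It then remains to bound $(1+2|x-x_0|/\rho(x))^{-\sigma}$ from below. Here I would invoke the lower estimate for $\rho(x)$ from \eqref{eq: constantesRho}, namely $\rho(x)\geq C_0^{-1}\rho(x_0)(1+|x-x_0|/\rho(x_0))^{-N_0}$, which combined with $|x-x_0|/\rho(x_0)\geq 2$ yields $1+|x-x_0|/\rho(x)\lesssim (|x-x_0|/\rho(x_0))^{N_0+1}$. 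This contributes the extra factor $(\rho(x_0)/|x-x_0|)^{\sigma(N_0+1)}$ and completes the bound.

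For part \eqref{item: lema: maximal de la caracteristica de una bola critica - item b}, I would start by writing the Luxemburg norm explicitly: on any ball $B(x,r)$ the norm $\|\mathcal{X}_{B_0}\|_{\varphi,B(x,r)}$ vanishes when $B_0\cap B(x,r)=\emptyset$ and otherwise equals $1/\varphi^{-1}(|B(x,r)|/|B_0\cap B(x,r)|)$. The hypothesis $x\notin 2B_0$ forces the intersection to be empty unless $r\geq |x-x_0|-\rho(x_0)\geq |x-x_0|/2$. Using the trivial bound $|B_0\cap B(x,r)|\leq |B_0|$, this Luxemburg norm is dominated by $1/\varphi^{-1}((r/\rho(x_0))^n)$, which is a decreasing function of $r$; the factor $(1+r/\rho(x))^{-\sigma}$ is also decreasing in $r$. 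Therefore the supremum over admissible radii is essentially attained at $r\approx |x-x_0|$, and I would plug this value into both factors. The final step uses the upper estimate $\rho(x)\leq C_0\rho(x_0)(1+|x-x_0|/\rho(x_0))^{N_0/(N_0+1)}$ from \eqref{eq: constantesRho}, which gives $1+|x-x_0|/\rho(x)\gtrsim (|x-x_0|/\rho(x_0))^{1/(N_0+1)}$; this provides the required $(\rho(x_0)/|x-x_0|)^{\sigma/(N_0+1)}$ factor and, combined with $1/\varphi^{-1}((|x-x_0|/\rho(x_0))^n)$, yields the claimed upper bound.

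The only genuinely delicate point is the bookkeeping of the two different exponents in \eqref{eq: constantesRho}: the lower bound for $\rho(x)$ carries the exponent $-N_0$, while the upper bound carries $N_0/(N_0+1)$. The lower bound gets used in part \eqref{item: lema: maximal de la caracteristica de una bola critica - item a} (to inflate $1+|x-x_0|/\rho(x)$) and the upper bound in part \eqref{item: lema: maximal de la caracteristica de una bola critica - item b} (to deflate it), and this is precisely why the final powers differ between the two statements. Apart from this, the proof is essentially a direct computation with the Luxemburg norm together with the monotonicity observation that both factors in the supremum are decreasing in $r$.
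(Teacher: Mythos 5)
Your proposal is correct and follows essentially the same route as the paper. In part \eqref{item: lema: maximal de la caracteristica de una bola critica - item a} you test on the single ball $B(x,2|x-x_0|)$ and the paper tests on balls of radius between $|x-x_0|-\rho(x_0)$ and $|x-x_0|+\rho(x_0)$, but both choices are radius $\approx|x-x_0|$ and the two computations are the same; in part \eqref{item: lema: maximal de la caracteristica de una bola critica - item b} the paper bounds both factors uniformly over admissible radii using $r_B\geq |x-x_0|/2$, which is equivalent to (and slightly cleaner than) your monotonicity-in-$r$ observation, and the asymmetric use of the two exponents in \eqref{eq: constantesRho} is exactly as you describe.
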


\begin{proof}
    Let us start by proving \eqref{item: lema: maximal de la caracteristica de una bola critica - item a}. Fix any ball $B=B(x,r_B)$ and assume that $r_1\leq r_B\leq r_2$, where $r_1=|x-x_0|-\rho(x_0)$ and $r_2=|x-x_0|+\rho(x_0)$. Using equation \eqref{eq: constantesRho} we get that
    \begin{align*}
      1+\frac{r_B}{\rho(x)}\leq 1+\frac{|x-x_0|+\rho(x_0)}{\rho(x)}\leq 1+\frac{3|x-x_0|}{2\rho(x)}&\leq 1+\frac{3C_0|x-x_0|}{2\rho(x_0)}\left(1+\frac{|x-x_0|}{\rho(x_0)}\right)^{N_0}\\
      &\leq 3C_02^{N_0}\left(\frac{|x-x_0|}{\rho(x_0)}\right)^{N_0+1}.
    \end{align*}
 Therefore
    \begin{align*}
        \mathcal{M}_c^{\rho,\sigma}(\mathcal{X}_{B_0})(x)&\geq \sup_{r_1\leq r_B\leq r_2}\left(1+\frac{r_B}{\rho(x)}\right)^{-\sigma}\frac{|B\cap B_0|}{|B|}\\
        &\geq C\sup_{r_1\leq r_B\leq r_2}\left[3C_02^{N_0}\left(\frac{|x-x_0|}{\rho(x_0)}\right)^{N_0+1}\right]^{-\sigma}\frac{|B\cap B_0|}{(|x-x_0|+\rho(x_0))^n}\\
        &\geq C(3C_02^{N_0})^{-\sigma}\left(\frac{|x-x_0|}{\rho(x_0)}\right)^{-\sigma(N_0+1)}\left(\frac{\rho(x_0)}{|x-x_0|}\right)^n.
    \end{align*}
    In order to prove \eqref{item: lema: maximal de la caracteristica de una bola critica - item b}, we fix again $B=B(x, r_B)$ and assume that $r_B\geq |x-x_0|-\rho(x_0)$, since the average $\|\mathcal{X}_{B_0}\|_{\varphi, B}$ is zero otherwise.
    Then, for $\lambda>0$ we have that
    \[\frac{1}{|B|}\int_B \varphi\left(\frac{\mathcal{X}_{B_0}}{\lambda}\right)=\frac{|B\cap B_0|}{|B|}\varphi\left(\frac{1}{\lambda}\right)\leq 2^n\left(\frac{\rho(x_0)}{|x-x_0|}\right)^n\varphi\left(\frac{1}{\lambda}\right)\leq 1,\]
    provided we choose
    \[\lambda\geq \left[\varphi^{-1}\left(\left(\frac{|x-x_0|}{2\rho(x_0)}\right)^n\right)\right]^{-1}.\]
    On the other hand, again by \eqref{eq: constantesRho} we have 
    \begin{align*}
        1+\frac{r_B}{\rho(x)}\geq \frac{|x-x_0|-\rho(x_0)}{\rho(x)}
        \geq \frac{|x-x_0|}{2\rho(x)}&\geq \frac{|x-x_0|}{2}\left[C_0\rho(x_0)\left(1+\frac{|x_0-x|}{\rho(x_0)}\right)^{N_0/(N_0+1)}\right]^{-1}\\
        &\geq C_0^{-1}2^{-N_0/(N_0+1)}\left(\frac{|x_0-x|}{\rho(x_0)}\right)^{1-N_0/(N_0+1)},
    \end{align*}
    which implies that
    \[\left(1+\frac{r_B}{\rho(x)}\right)^{-\sigma} \leq (2C_0)^\sigma\left(\frac{|x_0-x|}{\rho(x_0)}\right)^{-\sigma/(N_0+1)}.\]
    Therefore 
    \begin{equation}\label{eq: lema: maximal de la caracteristica de una bola critica - eq1}
    \left(1+\frac{r_B}{\rho(x)}\right)^{-\sigma}\|\mathcal{X}_{B_0}\|_{\varphi, B}\leq (2C_0)^{\sigma} \left(\frac{|x-x_0|}{\rho(x_0)}\right)^{-\sigma/(N_0+1)}\left[\varphi^{-1}\left(\left(\frac{|x-x_0|}{\rho(x_0)}\right)^n\right)\right]^{-1},    
    \end{equation}
    for every $r_B\geq |x-x_0|+\rho(x_0)$. By taking supremum in $r_B$ we obtain \eqref{item: lema: maximal de la caracteristica de una bola critica - item b}. 
\end{proof}

We are now in a position to proceed with the main proof.

\begin{proof}[Proof of Theorem~\ref{teo: equivalencias}]
    Let us prove that \eqref{item: teo: equivalencias - item a} implies \eqref{item: teo: equivalencias - item b}. Fixed $\theta\geq 0$, by Lemma~\ref{lema: estimacion conjunto de nivel por integral}, there exist positive constants $C,\sigma$ and $t_0>1$ such that
    \[w(\{x\in\mathbb{R}^n: M_\eta^{\rho,\sigma}f(x)>\lambda\})\leq C\int_{t_0}^\infty M^{\rho,\theta}w\left(\left\{x\in\mathbb{R}^n: |f(x)|>\lambda s\right\}\right)\eta'(s)\,ds.\]
    Therefore,
    \begin{align*}
\int_{\mathbb{R}^n}\phi(M_\eta^{\rho,\sigma} f(x))w(x)\,dx&=\int_0^{\infty} a(\lambda)w(\{x\in\mathbb{R}^n: M_\eta^{\rho,\sigma}f(x)>\lambda\})\,d\lambda\\
        &\leq C\int_0^\infty a(\lambda)\int_{t_0}^\infty M^{\rho,\theta}w(\{x\in\mathbb{R}^n:|f(x)|>\lambda s\})\eta'(s)\,ds\,d\lambda\\
        &\leq C\int_0^\infty a(\lambda)\int_{1}^\infty M^{\rho,\theta}w(\{x\in\mathbb{R}^n:|f(x)|>\lambda s\})\eta'(s)\,ds\,d\lambda\\
        &=C\int_0^\infty M^{\rho,\theta}w(\{x: |f(x)|>s\})\left(\int_0^{s}\frac{a(\lambda)}{\lambda}\eta'(s/\lambda)\,d\lambda\right)\,ds\\
        &\leq C\int_0^\infty b(Cs) M^{\rho,\theta}w(\{x: |f(x)|>s\})\,ds\\
        &\leq C\int_{\mathbb{R}^n}\psi(C|f(x)|)M^{\rho,\theta}w(x)\,dx,
    \end{align*}
    from the definition of $\psi$ on \eqref{eq: definicion de phi y psi}.

\medskip
    
Observe that \eqref{item: teo: equivalencias - item c} follows from \eqref{item: teo: equivalencias - item b} by taking $f/(C_0\|f\|_{\psi, M^{\rho,\theta}w})$ instead of $f$, where $C_0>C^2$.

\medskip

We now prove that \eqref{item: teo: equivalencias - item c} implies \eqref{item: teo: equivalencias - item d}. Fix $f$ and take $\alpha=(\int_{\mathbb{R}^n}\psi(C|f(x)|)\,dx)^{-1}$. Clearly,
\[\int_{\mathbb{R}^n}\psi(C|f(x)|)M^{\rho,\theta}\alpha(x)\,dx=\int_{\mathbb{R}^n}\psi(C|f(x)|)\alpha\,dx=1.\]
Therefore, $\|f\|_{\psi,M^{\rho,\theta}\alpha}=1/C$. By hypothesis, we get that 
\[\|M_\eta^{\rho, \sigma} f \|_{\phi, \alpha}\leq C\|f\|_{\psi, M^{\rho,\theta}\alpha}=1,\]
which leads to
\[\int_{\mathbb{R}^n}\phi(M_\eta^{\rho,\sigma} f(x))\alpha\,dx \leq 1 =\int_{\mathbb{R}^n}\psi(C|f(x)|)\alpha\,dx,\]
which yields \eqref{item: teo: equivalencias - item d}.

\medskip

 Now we prove that \eqref{item: teo: equivalencias - item b} implies \eqref{item: teo: equivalencias - item e}. Fix $\theta\geq 0$, let $\sigma$ be the constant provided by \eqref{item: teo: equivalencias - item b} and let $\gamma\geq \sigma$. Fix $x$ and a cube $Q=Q(x_Q, r_Q)$ that contains $x$. From the generalized Hölder inequality we have that \refstepcounter{BP}\label{pag: estimacion de b implica e}
\begin{align*}
    \left(1+\frac{r_Q}{\rho(x_Q)}\right)^{-\gamma}\frac{1}{|Q|}\int_Q |f|u&\lesssim \left(1+\frac{r_Q}{\rho(x_Q)}\right)^{-\sigma}\|f\|_{\eta,Q}\left(1+\frac{r_Q}{\rho(x_Q)}\right)^{\sigma-\gamma}\|u\|_{\tilde\eta,Q}\\
    &\lesssim M_\eta^{\rho,\sigma} f(x)\,\,M_{\tilde\eta}^{\rho,\gamma-\sigma}u(x).
\end{align*}
By taking supremum over $Q$ we get that $M^{\rho,\gamma}(fu)(x)\lesssim M_\eta^{\rho,\sigma} f(x)\,\,M_{\tilde\eta}^{\rho,\gamma-\sigma}u(x)$.
Then, by \eqref{item: teo: equivalencias - item b} we have that 
\[\int_{\mathbb{R}^n}\phi\left(\frac{M^{\rho,\gamma}(fu)(x)}{M_{\tilde \eta}^{\rho,\gamma-\sigma}u(x)}\right)w(x)\,dx\leq \int_{\mathbb{R}^n} \phi\left(M_\eta^{\rho,\sigma}f(x)\right)w(x)\,dx\leq C\int_{\mathbb{R}^n}\psi(C|f(x)|)M^{\rho,\theta}w(x)\,dx.\]
Thus \eqref{item: teo: equivalencias - item e} can be obtained by replacing $f$ by $f/u$.

\medskip

We shall now prove that \eqref{item: teo: equivalencias - item e} implies \eqref{item: teo: equivalencias - item a}. By applying our hypothesis with $w$ and $\theta\geq 0$ to be chosen and $f$ replaced by $fu$ we get that
\[\int_{\mathbb{R}^n}\phi\left(\frac{M^{\rho,\sigma}(fu)(x)}{M_{\tilde\eta}\,u(x)}\right)w(x)\,dx\leq C\int_{\mathbb{R}^n}\psi(|f(x)|)M^{\rho,\theta}w(x)\,dx,\]
since we have chosen $\gamma=\sigma$. Fix $t>0$, $x_0\in\mathbb{R}^n$, consider the ball $B_0=B(x_0,\rho(x_0))$ and choose $f=t\mathcal{X}_{B_0}$. By taking $u=\mathcal{X}_{B_0}$ and combining Lemma~\ref{lema: maximal de la caracteristica de una bola critica} with \eqref{eq: equivalencia entre centrada y no centrada} we have that
\[M^{\rho,\sigma}(fu)(x)\gtrsim t\left(\frac{\rho(x_0)}{|x-x_0|}\right)^{n+\sigma(N_0+1)} \]
and
\[ M_{\tilde\eta}u(x)\lesssim \left[\tilde\eta^{-1}\left(\left(\frac{|x-x_0|}{\rho(x_0)}\right)^n\right)\right]^{-1}.\]
Therefore
\begin{align*}
\left(\int_{B_0}M^{\rho,\theta}w\right)\psi(t)&=\int_{\mathbb{R}^n}\psi(|f(x)|)M^{\rho,\theta}w(x)\,dx\\
&\geq C\int_{\mathbb{R}^n\setminus 2B_0}\phi\left(\frac{M^{\rho,\sigma}(fu)(x)}{M_{\tilde\eta}u(x)}\right)w(x)\,dx\\
&\gtrsim \int_{\mathbb{R}^n\setminus 2B_0}\phi\left(t\left(\frac{\rho(x_0)}{|x-x_0|}\right)^{n+\sigma(N_0+1)}\tilde\eta^{-1}\left(\frac{|x-x_0|}{\rho(x_0)}\right)^n\right)w(x)\,dx.
\end{align*}
Now we choose 
\[w(x)=\frac{\phi\left(t\left(\frac{\rho(x_0)}{|x-x_0|}\right)^{n}\tilde\eta^{-1}\left(\frac{|x-x_0|}{\rho(x_0)}\right)^n\right)}{\phi\left(t\left(\frac{\rho(x_0)}{|x-x_0|}\right)^{n+\sigma(N_0+1)}\tilde\eta^{-1}\left(\frac{|x-x_0|}{\rho(x_0)}\right)^n\right)}\mathcal{X}_{\mathbb{R}^n\setminus 2B_0}(x).\]
\begin{afirmacion}\label{af: finitud de maximal de w}
    There exist $\theta\geq 0$ and a positive constant $C$ that does not depend on $w$ such that
    \[\sup_{B_0} M^{\rho,\theta}w(x)\leq C.\]
\end{afirmacion}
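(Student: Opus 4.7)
The plan is to control the weighted average of $w$ over every cube $Q$ containing a point $x\in B_0$ and to kill the resulting growth with the factor $(1+r_Q/\rho(x_Q))^{-\theta}$ by choosing $\theta$ large enough. Since $w$ vanishes on $2B_0$, only cubes $Q$ that protrude outside $2B_0$ can contribute. A short geometric argument (using $x\in Q\cap B_0$ and the containment of $Q$ in a ball of radius comparable to $r_Q$) shows that such cubes must satisfy $r_Q\gtrsim \rho(x_0)$ and $Q\subseteq B(x_0,Cr_Q)$ with $C=C_n$ depending only on $n$. Applying \eqref{eq: constantesRho} at $x_Q$, with $|x_Q-x_0|\lesssim r_Q$, yields $\rho(x_Q)\lesssim \rho(x_0)(r_Q/\rho(x_0))^{N_0/(N_0+1)}$, and hence
\[
\left(1+\frac{r_Q}{\rho(x_Q)}\right)^{-\theta}\lesssim\left(\frac{r_Q}{\rho(x_0)}\right)^{-\theta/(N_0+1)}.
\]

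Because $w$ is radial about $x_0$, passing to polar coordinates (writing $s=|y-x_0|/\rho(x_0)$, $R=Cr_Q/\rho(x_0)\gtrsim 1$ and $\alpha=\sigma(N_0+1)$) reduces the estimation of $\int_Q w$ to the one-dimensional integral
\[
\frac{1}{|Q|}\int_Q w \,\lesssim\, R^{-n}\int_{2}^{R}\frac{\phi\bigl(ts^{-n}\tilde\eta^{-1}(s^n)\bigr)}{\phi\bigl(ts^{-n-\alpha}\tilde\eta^{-1}(s^n)\bigr)}\, s^{n-1}\,ds.
\]
The relation \eqref{eq: relacion Phi - tilde Phi} allows me to replace $\tilde\eta^{-1}(s^n)$ by $s^n/\eta^{-1}(s^n)$ up to universal multiplicative constants, so the integrand takes the form $\phi(A)/\phi(As^{-\alpha})$ with $A\asymp t/\eta^{-1}(s^n)$. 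The substitution $v=\eta^{-1}(s^n)$, together with the identity $s^{n-1}\,ds=\eta'(v)\,dv/n$, rewrites the radial integral as an integral in $v$ against the natural $\eta$-measure, which is the form best adapted to the Orlicz structure at hand.

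The remaining ingredient is a polynomial pointwise estimate of the form $w(y)\lesssim (|y-x_0|/\rho(x_0))^{\beta}$ for some $\beta$ depending only on $\sigma$, $n$, $N_0$ and the doubling constant of $\eta$. Granted such a bound, the radial integral grows at most like $R^{n+\beta}$, so combining with the decay factor from \eqref{eq: constantesRho} one gets
\[
\left(1+\frac{r_Q}{\rho(x_Q)}\right)^{-\theta}\frac{1}{|Q|}\int_Q w \,\lesssim\, R^{\beta-\theta/(N_0+1)},
\]
uniformly in $t$, $x_0$ and $R$, and this is bounded as soon as $\theta\geq\beta(N_0+1)$. Taking the supremum over $Q\ni x$ and over $x\in B_0$ concludes the claim with a constant independent of $w$.

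The main obstacle is precisely the derivation of the polynomial pointwise bound $w(y)\lesssim(|y-x_0|/\rho(x_0))^{\beta}$. Since $\phi$ is not itself assumed to satisfy $\Delta_2$, one cannot argue via direct doubling of $\phi$; instead, the bound must be extracted from the $\Delta_2$ hypothesis on $\eta$, which controls $\eta^{-1}(s^n)$ along geometric progressions in $s$, together with the sharp inequality \eqref{eq: relacion Phi - tilde Phi} linking $\tilde\eta^{-1}$ to $\eta^{-1}$. These two ingredients prevent the arguments of $\phi$ in the numerator and denominator of $F(s)$ from differing by an unbounded factor on the range of $s$ involved in the integral, which is exactly what is needed to close the estimate.
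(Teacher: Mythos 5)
Your preliminary reductions match the paper's: restrict to cubes (or balls) that escape $2B_0$, use~\eqref{eq: constantesRho} at the center to turn $(1+r/\rho(x))^{-\theta}$ into $\lesssim(1+r/\rho(x_0))^{-\theta/(N_0+1)}$, and invoke~\eqref{eq: relacion Phi - tilde Phi} to rewrite the $\tilde\eta^{-1}$ term. Passing to polar coordinates is a cosmetic variant of the paper's dyadic-annuli decomposition; no difference in substance.

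The gap is exactly where you flag it: the pointwise polynomial bound $w(y)\lesssim(|y-x_0|/\rho(x_0))^\beta$ is not a detail one can ``grant'' — it \emph{is} the claim, and your proposal neither proves it nor proposes a viable route. Worse, your diagnosis of how to get it is wrong. After applying~\eqref{eq: relacion Phi - tilde Phi}, with $s=|y-x_0|/\rho(x_0)$ and $\alpha=\sigma(N_0+1)$, the weight takes the form
\[
w(y)\;\asymp\;\frac{\phi\bigl(c_1\,t\,[\eta^{-1}(s^n)]^{-1}\bigr)}{\phi\bigl(c_2\,t\,s^{-\alpha}[\eta^{-1}(s^n)]^{-1}\bigr)},
\]
i.e.\ the factor $[\eta^{-1}(s^n)]^{-1}$ appears \emph{identically} in both arguments and plays no role in the ratio; the mismatch is entirely the polynomial factor $s^{-\alpha}$. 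Thus the $\Delta_2$ hypothesis on $\eta$ gives you nothing here — the ratio $\phi(A)/\phi(As^{-\alpha})$ can only be tamed by a growth condition on $\phi$ itself. The paper does precisely this: it invokes a doubling constant $C_d$ for $\phi$ and iterates $\phi(2z)\le C_d\phi(z)$ roughly $\alpha\log_2 s$ times to obtain $w(y)\lesssim s^{\alpha\log_2 C_d}$, which is the polynomial bound you needed. Your assertion that ``one cannot argue via direct doubling of $\phi$'' is therefore exactly opposite to what is done, and the alternative you sketch does not close.

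You are, however, right to be uneasy: doubling of $\phi$ is not listed among the hypotheses of Theorem~\ref{teo: equivalencias} (only $\eta\in\Delta_2$ is assumed, and $\phi(t)=\int_0^t a$ for a positive continuous $a$ need not be doubling). That is a genuine subtlety in the paper's argument, but flagging a potential missing hypothesis is not the same as supplying a proof that avoids it, and your sketch does not.
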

We shall postpone the proof of this claim. If this estimate holds, then we can conclude that
\[C|B_0|\psi(t)\geq \int_{\mathbb{R}^n\setminus 2B_0}\phi\left(t\left(\frac{\rho(x_0)}{|x-x_0|}\right)^{n}\tilde\eta^{-1}\left(\left(\frac{|x-x_0|}{\rho(x_0)}\right)^n\right)\right)\,dx.\]
By means of \eqref{eq: relacion Phi - tilde Phi} we can continue the estimate as follows
\begin{align*}
    C|B_0|\psi(t)&\geq \int_0^\infty a(\lambda)\left|\left\{x\in\mathbb{R}^n\backslash 2B_0:\, t\left(\frac{\rho(x_0)}{|x-x_0|}\right)^{n}\tilde\eta^{-1}\left(\left(\frac{|x-x_0|}{\rho(x_0)}\right)^n\right)>\lambda\right\}\right|\,d\lambda\\
    &\geq \int_0^\infty a(\lambda)\left|\left\{x\in\mathbb{R}^n\backslash 2B_0:\, \frac{t}{2}>\eta^{-1}\left(\left(\frac{|x-x_0|}{\rho(x_0)}\right)^n\right)\lambda\right\}\right|\,d\lambda\\
    &= \int_0^\infty a(\lambda)\left|\left\{x\in\mathbb{R}^n\backslash 2B_0:\, \left[\eta\left(\frac{t}{2\lambda}\right)\right]^{1/n}\rho(x_0)>|x-x_0|\right\}\right|\,d\lambda.
\end{align*}
Notice that $\displaystyle 2^n<\frac{1}{2}\eta\left(\frac{t}{2\lambda}\right)$ if $0<\lambda<t/(2\eta^{-1}(2^{n+1}))$. Therefore,
\begin{align*}
    C|B_0|\psi(t)&\geq \int_0^{t/(2\eta^{-1}(2^{n+1}))} a(\lambda)\left|\left\{x\in\mathbb{R}^n\backslash 2B_0:\, \left[\eta\left(\frac{t}{2\lambda}\right)\right]^{1/n}\rho(x_0)>|x-x_0|\right\}\right|\,d\lambda\\
    &=C|B_0|\int_0^{t/(2\eta^{-1}(2^{n+1}))} a(\lambda)\eta\left(\frac{t}{2\lambda}\right)\,d\lambda\\
    &\geq C t|B_0|\int_0^{t/(2\eta^{-1}(2^{n+1}))}\frac{a(\lambda)}{\lambda}\eta'\left(\frac{t}{\lambda}\right)\,d\lambda,
\end{align*}
since $\eta$ is doubling and also it is easy to see that $\eta'(z)\approx \eta(z)/z$.
    Since $b$ is non-decreasing, we arrive to
    \[\int_0^{Ct} \frac{a(\lambda)}{\lambda}\eta'\left(\frac{t}{\lambda}\right)\,d\lambda\leq C\frac{\psi(t)}{t}\leq Cb(Ct),\]
    which gives us \eqref{item: teo: equivalencias - item a}

\medskip

We now prove that \eqref{item: teo: equivalencias - item d} implies \eqref{item: teo: equivalencias - item a}. Fixed $\theta\geq 0$ and any $0\leq \gamma\leq \sigma$, where $\sigma>0$ is provided by hypothesis, we proceed similarly as in page~\pageref{pag: estimacion de b implica e} with $w=1$ to obtain
\[\int_{\mathbb{R}^n}\phi\left(\frac{M^{\rho,\gamma}(fu)(x)}{M_{\tilde\eta}^{\rho,\sigma-\gamma}u(x)}\right)\,dx\leq C\int_{\mathbb{R}^n}\psi(|f(x)|)\,dx.\]
Now we can repeat the argument that we have used to prove that \eqref{item: teo: equivalencias - item e} implies \eqref{item: teo: equivalencias - item a} to get the desired result.  
\end{proof}

\begin{proof}[Proof of Claim~\ref{af: finitud de maximal de w}]
    Recall that
    \[w(x)=\frac{\phi\left(t\left(\frac{\rho(x_0)}{|x-x_0|}\right)^{n}\tilde\eta^{-1}\left(\frac{|x-x_0|}{\rho(x_0)}\right)^n\right)}{\phi\left(t\left(\frac{\rho(x_0)}{|x-x_0|}\right)^{n+\sigma(N_0+1)}\tilde\eta^{-1}\left(\frac{|x-x_0|}{\rho(x_0)}\right)^n\right)}\mathcal{X}_{\mathbb{R}^n\setminus 2B_0}(x).\]
    Fix $x\in B_0=B(x_0,\rho(x_0))$. By \eqref{eq: equivalencia entre centrada y no centrada} we have that
\[M^{\rho,\theta}w(x)\leq C \mathcal{M}_c^{\rho,\theta/(N_0+1)}w(x),\]
so it will be enough to estimate the centered version of $M^{\rho,\theta}$. Let $B=B(x,r)$ be any ball centered in~$x$. We shall prove that there exists $C>0$ such that
\[\left(1+\frac{r}{\rho(x)}\right)^{-\theta/(N_0+1)}\frac{1}{|B|}\int_B w(y)\,dy\leq C\]
for every $r>0$. Applying condition \eqref{eq: constantesRho} we have that
\[1+\frac{r}{\rho(x)}\geq 1+\frac{r}{C_0\rho(x_0)}\left(1+\frac{|x-x_0|}{\rho(x_0)}\right)^{-N_0/(N_0+1)}\geq \frac{1}{C_0}\left(1+\frac{|x-x_0|}{\rho(x_0)}\right)^{-N_0/(N_0+1)}\left(1+\frac{r}{\rho(x_0)}\right),\]
and consequently
\begin{align*}
  \left(1+\frac{r}{\rho(x)}\right)^{-\theta/(N_0+1)}&\leq C_0^{\theta/(N_0+1)}\left(1+\frac{|x-x_0|}{\rho(x_0)}\right)^{N_0\theta/(N_0+1)^2}\left(1+\frac{r}{\rho(x_0)}\right)^{-\theta/(N_0+1)}\\
  &\leq 2^{N_0\theta/(N_0+1)^2}C_0^{\theta/(N_0+1)}\left(1+\frac{r}{\rho(x_0)}\right)^{-\theta/(N_0+1)}\\
  &=C\left(1+\frac{r}{\rho(x_0)}\right)^{-\theta/(N_0+1)}.
\end{align*}
On the other hand, using \eqref{eq: relacion Phi - tilde Phi}, for $y\in B$ we can write
\begin{align*}
    w(y)&\leq \frac{\phi\left(2t\left[\eta^{-1}\left(\left(\frac{|y-x_0|}{\rho(x_0)}\right)^n\right)\right]^{-1}\right)}{\phi\left(t\left(\frac{\rho(x_0)}{|y-x_0|}\right)^{\sigma(N_0+1)}\left[2\eta^{-1}\left(\left(\frac{|y-x_0|}{\rho(x_0)}\right)^n\right)\right]^{-1}\right)}\mathcal{X}_{\mathbb{R}^n\setminus 2B_0}(y)\\
    &\leq \sum_{k=1}^\infty \frac{\phi\left(2t\left[\eta^{-1}\left(\left(\frac{|y-x_0|}{\rho(x_0)}\right)^n\right)\right]^{-1}\right)}{\phi\left(\frac{t}{2}\left(\frac{\rho(x_0)}{|y-x_0|}\right)^{\sigma(N_0+1)}\left[\eta^{-1}\left(\left(\frac{|y-x_0|}{\rho(x_0)}\right)^n\right)\right]^{-1}\right)}\mathcal{X}_{2^{k+1}B_0\setminus 2^kB_0}(y).
\end{align*}
From the doubling condition on $\phi$, there exists a positive constant $C_d$ such that
\[\phi(2z)\leq C_d\phi(z),\]
for every $z$.
If $y\in 2^{k+1}B_0\setminus 2^kB_0$, then
\[2^k\rho(x_0)<|y-x_0|\leq 2^{k+1}\rho(x_0).\]
Then we continue as follows
\begin{align*}
w(y)&\leq \sum_{k=1}^\infty \frac{\phi\left(2t\left[\eta^{-1}\left(\left(\frac{|y-x_0|}{\rho(x_0)}\right)^n\right)\right]^{-1}\right)}{\phi\left(\frac{1}{4}\,2^{(-k-1)\lceil\sigma\rceil(N_0+1)}2t\left[\eta^{-1}\left(\left(\frac{|y-x_0|}{\rho(x_0)}\right)^n\right)\right]^{-1}\right)}\mathcal{X}_{2^{k+1}B_0\setminus 2^kB_0}(y)\\
&\leq \sum_{k=1}^\infty C_d^{(k+1)\lceil\sigma\rceil(N_0+1)+2}\mathcal{X}_{2^{k+1}B_0\setminus 2^kB_0}(y)\\
&=C\sum_{k=1}^\infty C_d^{k\lceil \sigma \rceil(N_0+1)}\mathcal{X}_{2^{k+1}B_0\setminus 2^kB_0}(y).
\end{align*}
For $y\in 2^{k+1}B_0\backslash 2^k B_0$ we have
\[2^k<\frac{|y-x_0|}{\rho(x_0}\quad \text{ or equivalently }\quad k<\log_2\left(\frac{|y-x_0|}{\rho(x_0)}\right).\]
Also observe that $|y-x_0|\leq |y-x|+|x-x_0|<r+\rho(x_0)$. Therefore
\begin{align*}
    \left(1+\frac{r}{\rho(x)}\right)^{-\theta/(N_0+1)}\frac{1}{|B|}\int_B w&\leq C\sum_{k=1}^\infty 2^{-k}\left(1+\frac{r}{\rho(x_0)}\right)^{-\theta/(N_0+1)}2^{(\log_2 C_d)k\lceil \sigma \rceil(N_0+1)+k}\mathcal{X}_{2^{k+1}B_0\setminus 2^kB_0}\\
    &\leq C\sum_{k=1}^\infty 2^{-k}\left(1+\frac{r}{\rho(x_0)}\right)^{-\theta/(N_0+1)+(\log_2 C_d)\lceil \sigma \rceil (N_0+1)+1}\\ 
    &\leq C,
\end{align*}
provided we choose $\theta$ sufficiently large such that $-\theta/(N_0+1)+(\log_2 C_d)\lceil \sigma \rceil (N_0+1)+1<0$, where $\lceil \sigma \rceil$ is the greatest integer less or equal to $\sigma$.
\end{proof}

\medskip

Before proceeding to the proof of Theorem~\ref{teo: equivalencias ampliado}, we state a useful lemma which establishes a relation between $\|\cdot\|_\varphi$ and $\varrho_\varphi(\cdot)$. Although it holds true for a wider class of functions, we only need the case of Young functions (see Corollary 3.2.5 in \cite{HH19}).  

\begin{lema}\label{lema: relacion modular y norma}
    Let $\varphi$ be a Young function and $f\in L^\varphi$.
    \begin{enumerate}[\rm(a)]
        \item \label{item: lema: relacion modular y norma - item a} If $\|f\|_\varphi\leq 1$, then $\varrho_\varphi(f)\leq \|f\|_\varphi$.
        
        \item \label{item: lema: relacion modular y norma - item b} If $\|f\|_\varphi>1$, we have that $\|f\|_\varphi\leq \varrho_\varphi(f)$.
    \end{enumerate}
\end{lema}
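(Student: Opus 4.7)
The plan is to leverage the convexity of $\varphi$, together with a standard semicontinuity property of the modular. Before splitting into the two cases, I would deal with the degenerate case $\|f\|_\varphi=0$ (which forces $f=0$ almost everywhere, making both parts trivial), and I would establish the key inequality
\[
\varrho_\varphi(f/\|f\|_\varphi)\leq 1.
\]
This follows by choosing a decreasing sequence $\lambda_n\downarrow \|f\|_\varphi$ with $\varrho_\varphi(f/\lambda_n)\leq 1$, noting that $\varphi(|f|/\lambda_n)\uparrow \varphi(|f|/\|f\|_\varphi)$ pointwise because $\varphi$ is increasing and continuous (as a convex function with $\varphi(0)=0$), and invoking monotone convergence.

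For item~\eqref{item: lema: relacion modular y norma - item a}, assume $0<\|f\|_\varphi\leq 1$. Setting $\alpha=\|f\|_\varphi\in(0,1]$ and using $\varphi(0)=0$ together with convexity gives $\varphi(\alpha t)\leq \alpha\,\varphi(t)$ for all $t\geq 0$. Applying this with $t=|f(x)|/\|f\|_\varphi$ and integrating,
\[
\varrho_\varphi(f)=\int_{\mathbb{R}^n}\varphi(|f(x)|)\,dx\leq \|f\|_\varphi\int_{\mathbb{R}^n}\varphi\!\left(\frac{|f(x)|}{\|f\|_\varphi}\right)\,dx\leq \|f\|_\varphi,
\]
where the last inequality is the key estimate mentioned above.

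For item~\eqref{item: lema: relacion modular y norma - item b}, assume $\|f\|_\varphi>1$. For any $\lambda$ with $1<\lambda<\|f\|_\varphi$, the definition of the Luxemburg norm as an infimum forces $\varrho_\varphi(f/\lambda)>1$. Since now $1/\lambda\in (0,1)$, the same convexity estimate as before (applied with $\alpha=1/\lambda$) yields
\[
\varphi\!\left(\frac{|f(x)|}{\lambda}\right)\leq \frac{1}{\lambda}\,\varphi(|f(x)|),
\]
so integrating gives $\varrho_\varphi(f)\geq \lambda\,\varrho_\varphi(f/\lambda)>\lambda$. Letting $\lambda\to \|f\|_\varphi^-$ concludes that $\varrho_\varphi(f)\geq \|f\|_\varphi$.

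The only place where anything subtle happens is in justifying $\varrho_\varphi(f/\|f\|_\varphi)\leq 1$, which requires the monotone convergence argument; all the remaining steps are direct applications of convexity and the definitions. I do not expect any serious obstacle in executing the plan.
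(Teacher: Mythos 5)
Your proof is correct. Note that the paper itself does not prove this lemma: it simply cites Corollary 3.2.5 of Harjulehto--Hästö, so there is no in-paper argument to compare against. Your argument is the standard one: use that $\varphi$ is convex with $\varphi(0)=0$ to get $\varphi(\alpha t)\leq\alpha\varphi(t)$ for $0\leq\alpha\leq 1$, normalize $f$ by its Luxemburg norm, and justify $\varrho_\varphi(f/\|f\|_\varphi)\leq 1$ by monotone convergence (the infimum in the Luxemburg norm need not be attained, so this semicontinuity step is genuinely needed and you correctly flagged it as the only subtle point). The treatment of the degenerate case $\|f\|_\varphi=0$ and the passage to the limit $\lambda\to\|f\|_\varphi^-$ in part~(b) are both handled properly.
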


\begin{proof}[Proof of Theorem~\ref{teo: equivalencias ampliado}]
The equivalence between items~\eqref{item: teo: equivalencias - item a} to~\eqref{item: teo: equivalencias - item e} is obtained from Theorem~\ref{teo: equivalencias}. Notice that \eqref{item: teo: equivalencias - item c} implies \eqref{item: teo: equivalencias - item f} by taking $w=1$. We shall prove that \eqref{item: teo: equivalencias - item f} implies  \eqref{item: teo: equivalencias - item d}. Let us first assume that $\|f\|_\Psi=2$. By hypothesis we get that 
\[\left\|M_\eta^{\rho,\sigma}\left(\frac{f}{2C}\right)\right\|_\phi\leq 1.\]
Applying item~\eqref{item: lema: relacion modular y norma - item a} of Lemma~\ref{lema: relacion modular y norma} we obtain
\[\int_{\mathbb{R}^n}\phi\left(M_\eta^{\rho,\sigma}\left(\frac{f}{2C}\right)\right)\leq \left\|M_\eta^{\rho,\sigma}\left(\frac{f}{2C}\right)\right\|_\phi\leq 1=\left\|\frac{f}{2}\right\|_\Psi\leq \int_{\mathbb{R}^n}\Psi\left(\frac{|f|}{2}\right),\]
by means of item~\eqref{item: lema: relacion modular y norma - item b} in Lemma~\ref{lema: relacion modular y norma}.

If $\|f\|_\Psi>0$, we define $\tilde f= 2f/\|f\|_\Psi$. Then $\|\tilde f\|_\Psi=2$ and using the case above we arrive to
\[\int_{\mathbb{R}^n}\phi\left(M_\eta^{\rho,\sigma}\left(\frac{\tilde f}{2C}\right)\right)\leq \int_{\mathbb{R}^n}\Psi\left(\frac{|\tilde f|}{2}\right).\]
If $g=\tilde f/2C$, the above inequality can be written as
\[\int_{\mathbb{R}^n}\phi\left(M_\eta^{\rho,\sigma}g\right)\leq \int_{\mathbb{R}^n}\Psi\left(C|g|\right),\]
which gives \eqref{item: teo: equivalencias - item d} and the proof is complete.
\end{proof}

\def\cprime{$'$}
\providecommand{\bysame}{\leavevmode\hbox to3em{\hrulefill}\thinspace}
\providecommand{\MR}{\relax\ifhmode\unskip\space\fi MR }
\providecommand{\MRhref}[2]{%
  \href{http://www.ams.org/mathscinet-getitem?mr=#1}{#2}
}
\providecommand{\href}[2]{#2}

\end{document}